\newtheorem{theorem}{Theorem}[subsection]
\newtheorem{proposition}[theorem]{Proposition}
\newtheorem{corollary}[theorem]{Corollary}
\newtheorem{definition}[theorem]{Definition}
\renewcommand{\leq}{\leqslant}
\renewcommand{\geq}{\geqslant}
\newcommand{\bb}{\mathsf b}
\newcommand{\mc}{\mathcal}
\newcommand{\R}{\mathcal  R}
\renewcommand{\L}{\mathcal L}
\newcommand{\p}{\mathfrak p}
\newcommand{\q}{\mathfrak q}
\newcommand{\F}{\mathfrak F}
\newcommand{\G}{\operatorname{Gap}}
\newcommand{\Fd}{{\mathsf F}_2^+}
\renewcommand{\leq}{\leqslant} 
\renewcommand{\geq}{\geqslant}
\renewcommand{\epsilon}{\varepsilon}
\newcommand{\mk}{\mathfrak} 
\newcommand{\grf}{\pi_1(\Sigma)}
\newcommand{\bgrf}{\partial_{\infty}\grf}
\newcommand\defeq
\newcommand{\E}{\operatorname{Error}}
\newcommand{\demi}{\frac{1}{2}}
\title[Probability and McShane's identity]{The probabilistic nature of McShane's identity: planar tree coding of simple loops}
\author[Labourie]{François LABOURIE}
\address{Universit\'e C\^ote d'Azur, LJAD, Nice F-06000; FRANCE}
\author[Tan]{Ser Peow TAN}
\address{National University of Singapore; SINGAPORE}
\thanks{The research leading to these results has received funding from the European Research Council under the {\em European Community}'s seventh Framework Programme (FP7/2007-2013)/ERC {\em grant agreement} ${\rm n^o}$ FP7-246918. Tan was partially supported by the National University of Singapore academic research grant R-146-000-235-114.}
\begin{document}
\maketitle
\begin{center}
{\em With admiration, to our friend and mentor: Bill Goldman.}
\end{center}
\tableofcontents
\section{Introduction}

In his celebrated paper \cite{McShane:1998}, Greg McShane showed that for a  1-cusped hyperbolic torus the following identity holds:
\begin{eqnarray}
1=\sum_{\gamma\in\mathcal G}
\frac
{1}
{e^{\ell(\gamma)}+1}\ ,
\end{eqnarray}
where the sum runs over the set $\mathcal G$ of oriented  simple loops in the torus. Similarly for 1-cusped hyperbolic surfaces
\begin{eqnarray}
1=\sum_{P\in\mathcal P}
\frac{1}{e^{\frac{\ell(\partial P)}{2}}+1}\ .
\end{eqnarray}
where the sum runs now over the space $\mathcal P$ of embedded 1-cusp pair of pants -- or equivalently the space of oriented simple arcs from the cusp to itself. 

This formula has had a lot of descendants and generalisations, to quasifuchsian spaces \cite{Tan:2004wz,Tan:2004,Tan:2005fr,Tan:2008vv,Tan:2015wr}, to higher rank geometries \cite{Labourie:2009wi}, to surfaces with boundary components \cite{Mirzakhani:2007tt}. An extensive survey of geometric identities with similar features can be found in \cite{Bridgeman:2016tw}. Quite notably, this identity has played a fundamental role in Mirzakhani acclaimed proof of the Kontsevich--Witten formula for intersection numbers \cite{Mirzakhani:2007vi}.

Among these proofs stands a remarkable argument by Bowditch \cite{Bowditch:1996} proving McShane's identity on the torus using Markoff triples and a combinatorial approach. In turn this interpretation has given rise to yet another higher rank construction \cite{Tan:3c1rgtLw} whose geometrical interpretation remains elusive.

The present paper grow out of the fascination of both authors for Bowditch's proof and wishes to present a generalisation of Bowditch's proof for higher genus surfaces, equally working in the context of higher geometries. More importantly, we want to present a new interpretation of McShane's identity: essentially, we show that this formula has a probabilistic nature -- as emphasized by the 1 on the lefthand side of the equations above --  and we hope that in the future this could lead to a better understanding of Mirzakhani's topological recursion.

Essentially our interpretation runs as follows. Let $S$ be a topological surface with one marked point $x_0$. As a rebranding of a Lemma by  L. Mosher \cite{Mosher:1995dw}, we show that given any triangulation with $x_0$ as  the only vertex, one can ``code" any simple arc in $S$ passing through $x_0$  by  an edge of the original triangulation together with a word in the semigroup with two generators (Theorem \ref{theo:topcod}) and  similar results holds with infinite words and lamination (Theorem \ref{theo:exten}). We actually consider these words as embedded paths in some trivalent (except at the root) rooted planar tree $\mc T$.

Moreover, in that context the planar structure of the tree allows us to distinguish  between {\em rational paths} -- which end up coding rational laminations -- and {\em irrational paths} -- which interpret irrational laminations.

Now the geometry -- a hyperbolic structure or more generally a cross ratio \cite{Labourie:2009wi}\cite{Labourie:2005} -- gives rise to a harmonic 1-form on $\mc F$ and thus to a probability measure $\mu$ on the space of embedded paths.

Finally, using the Birman--Series Theorem \cite{Birman:1985} as in \cite{McShane:1998}, one concludes that the measure of the set of irrational paths is  zero, whereas the measure on the space of rational paths corresponds to the right-hand side of the equations above. Since the rational paths are countable and essentially labelled by simple paths though $x_0$ thanks to our coding theorem, McShane's identity is just expressing that  the sum of the measure of rational paths is one.

Although, there are no new results in this article and in many ways the proofs are either trivial or well known, we hope that our presentation insisting on the probabilistic nature of McShane's identity will be helpful in the future.

The first section recalls harmonic measures on trees and how they define random paths in section \ref{sec:harm}. Then we specialize to planar trees in section \ref{sec:plan} and describe the gap (the measure of rational paths) and error terms (the measure of irrational paths) of a general harmonic measure  in Section \ref{sec:gap}. In Section \ref{sec:err}, we relate the error term to the Lebesgue measure of a certain set on the circle.  In section \ref{sec:tree}, we introduce  a path coding geodesics once one choses a triangulation. Then in section \ref{sec:cr}, we explain how hyperbolic structures and more generally cross ratios give rise to specific measures. In section \ref{sec:BS}, we show that the Birman--Series Theorem implies that the error term vanishes. Finally in Section \ref{sec:codBD}, we indicate how one could generalize the above construction to surfaces with more boundary components.

We thank François Ledrappier, Maryam Mirzakhani, Hugo Parlier and Saul Schleimer for helpful discussions.

\section{Green formula for rooted trees}\label{sec:harm}

We will recall in this section very standard material about harmonic analysis on graphs.

Let $\mc T$ be a rooted tree whose root is denoted $v_0$. Motivated  by the applications to geodesics, we shall assume that all vertices  --except for the root -- has valence 3. 

 Every edge comes with an orientation: we say an edge is {\em positively oriented} or {\em positive} if it comes outward from the root, negatively oriented if it goes inward. We shall denote by $S(n)$ the sphere of combinatorial radius $n$ centred at the root $v_0$.  If $v$ is a vertex, let $\vert v\vert$ be the distance to the root, if $e$ is a positive edge we define by $\vert e\vert\defeq\vert v^+\vert$, where $v^+$ is the endpoint of $e$. Finally, if $e$ is an edge we denote by $\bar e$ the edge with the opposite orientation.

\subsection{Harmonic forms}

A  function $\Phi$ on the set of edges of the tree is said to be a {\em 1-form} if
$$
\Phi(\bar e)=-\Phi(e),
$$
moreover $\Phi$ is  {\em a harmonic}, if for every vertex different from the root, if $e_x$ is the incoming edge (from the root) and $e_\pm$ the outgoing edges, we have
\begin{align}
  \Phi(e_x)=\Phi(e_+)+\Phi(e_-).
\end{align}
Since $\Phi$ is a function of the edges, we could consider it  as a 1-form on the graph. Then the above equation just says that $\Phi$ considered as a 1-form is harmonic, everywhere except possibly at the root. Let us  then define
\begin{align}
  \partial\Phi\defeq\sum_{e\in S(1)}\Phi(e).
\end{align}

We have
\begin{corollary}{\sc[Green formula]}
The following equality holds
\begin{align}
\sum_{e\in S(n)}\Phi(e)&=\partial\Phi.
\end{align}
\end{corollary}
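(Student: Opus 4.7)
The plan is a straightforward induction on $n$, the key ingredient being that summing the harmonicity relation over a whole sphere telescopes cleanly thanks to the assumption that every non-root vertex has valence $3$ (hence one incoming and two outgoing positive edges).

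First, I note that $S(n)$ may be naturally identified with the set of positive edges $e$ with $|e|=n$: every vertex $v\neq v_0$ at distance $n$ has a unique incoming positive edge $e_v$, and this association $v\mapsto e_v$ is a bijection between the sphere of vertices of radius $n$ and the set of positive edges of level $n$. Under this identification the base case $n=1$ is just the definition of $\partial\Phi$.

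For the inductive step, assume $\sum_{e\in S(n)}\Phi(e)=\partial\Phi$. For each vertex $v$ with $|v|=n$, harmonicity at $v$ reads
\begin{align}
\Phi(e_v)=\Phi(e_+^v)+\Phi(e_-^v),
\end{align}
where $e_\pm^v$ are the two outgoing positive edges at $v$. The two edges $e_\pm^v$ have endpoints at distance $n+1$, so they belong to $S(n+1)$; conversely every edge of $S(n+1)$ arises exactly once this way, since each vertex at distance $n+1$ has a unique parent at distance $n$. Summing the harmonicity identity over all $v$ on $S(n)$ therefore yields
\begin{align}
\sum_{e\in S(n)}\Phi(e)=\sum_{|v|=n}\Phi(e_v)=\sum_{|v|=n}\bigl(\Phi(e_+^v)+\Phi(e_-^v)\bigr)=\sum_{e\in S(n+1)}\Phi(e),
\end{align}
and the inductive hypothesis concludes.

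There is really no obstacle here: the statement is a discrete Stokes/Green formula on a tree, and the argument is nothing more than summing the harmonicity relation across one level at a time. The only mild point worth spelling out is the bookkeeping tying $S(n)$ to positive edges of level $n$, which uses the trivalent assumption to guarantee that each interior vertex has exactly two children, so that the telescoping from level $n$ to level $n+1$ is an equality of sums with no leftover terms.
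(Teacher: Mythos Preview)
Your argument is correct: the induction on $n$ using harmonicity at each vertex of level $n$ is exactly the intended proof. The paper itself gives no proof, presenting the statement as an immediate corollary of the harmonicity relation and the definition of $\partial\Phi$; your write-up simply makes that implicit telescoping explicit, and your identification of $S(n)$ with positive edges of level $n$ matches the paper's conventions.
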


\subsection{The harmonic measure}   
Assume now that $\Phi$ is positive on positive edges.
Any vertex $x$ in the tree other than the root  is trivalent, let us define a probability measure $\mu_x$ on the edges having $x$ as an extremities (two outgoing, one ingoing) in the following way
\begin{itemize}
\item the probability $\mu_x(e_x)=0$ if $e_x$ is the edge directed towards the root,
\item Otherwise, let 
$$
\mu_x(e)=\frac{\Phi(e)}{\Phi(e_x)}.
$$
\end{itemize}
This family of measures defines an inhomogeneous Markov process. Let $\mc P$ be the set of infinite embedded paths in the tree starting from the root. Although we shall not need it we may identify $\mc P$ with $\partial_\infty\mc T$. Let $\pi_n$ be the map from $\mc P$ to set of edges of $\mc T$ which associates to a path its $n$ step:
$$
\p^n\defeq\pi_n(\p)\in S(n)\ .
$$

 We thus have the {\em harmonic measure}, see \cite{Ledrappier:2001uv}, $\mu_\Phi$ on $\mc P$ associated to this random process. By definition, this measure is such that 
\begin{align}
  \pi^n_*(\mu_\Phi)=\left.\Phi\right\vert_{S(n)}.
\end{align} 
In particular
\begin{align}
\mu_\Phi\{\p\}=\Phi_\infty(\p)\  \hbox{ and } \mu_\Phi(\mc P)=\partial\Phi\ ,\label{eq:atom}
\end{align}
where, given a path $\p$, we define 
$$
\Phi_\infty(\p)=\liminf_{n\to\infty}\Phi(\pi_n(\p)).
$$
Observe that since  $\Phi$ is positive on positive edges -- that is edges oriented away from the root -- $\Phi(\pi_n(\p))$ is decreasing as a function of $n$ and thus $\Phi_\infty$ is actually the limit of $\Phi\circ \pi_n$.

By construction, if $e\in S(n)$,  $\Phi(e)$ is the probability that a random path   starting from the root arrives at $e$ at the  $n$ step.

\section{Planar tree, complementary regions and rational paths}\label{sec:plan}

A {\em planar tree} is a tree that can be embedded in the plane. Equivalently a planar structure on the tree is given by a cyclic order on the edges outgoing form each vertex. In 
particular, 
\begin{itemize}
	\item we obtain a cyclic order on $S(n)$,
	\item  Given a positive edge $e$ arriving in $v$, we define the edges $\L(e)$ and $\R(e)$ to be the outgoing edges from $v$ so that $(\bar e, \L(e),\R(e))$ is positively oriented. In particular, the edges $\L(e)$ and $\R(e)$ are positive.
\end{itemize}

\subsection{Complementary regions}

Every  edge  $e$ in the tree defines (injectively)  a  {\em complementary region} of the planar graph if we considered it properly embedded in the plane. This region, also denoted  $e$, is bounded by the edges $\L^n\R(e)$ and $\R^n\L(e)$ for all non negative integers $n$ and we define the paths
$$
\partial^Re\defeq \bigcup_{n\in\mathbb N}\L^n\R(e),\ \  \partial^L e\defeq \bigcup_{n\in\mathbb N}\R^n\L(e)\ 
$$
where the paths are completed by adding the necessary initial edges so that they start at $v_0$ (by abuse of notation we will usually consider this tail end of the path to be the path).

So far some complementary regions are missing from this construction, namely those innermost regions which are adjacent to the root $v_0$. Let us label those as well. Let  $e^1,\ldots,e^N$ are the edges stemming from the root $v_0$ in their  cyclic order, let by definition 
$S(0)\defeq \{(e^i,e^{i+1})\mid i\in\{1,\ldots\}\}$. Then any $f=(e^i,e^{i+1})$ defines also a {\em complementary region} bounded by the two paths $\R^n(e^i)$ and $\L^n(e^{i+1})$. By convention we write $\L f\defeq e^i$ and $\R f\defeq e^{i+1}$. Then, similarly 
$$
\partial^R f\defeq \bigcup_{n\in\mathbb N}\L^n\R(f),\ \  \partial^L f\defeq \bigcup_{n\in\mathbb N}\R^n\L(f)\ .$$

The set 
$$
\mc F\defeq \bigsqcup_{n\in\mathbb N}S(n)\ ,
$$
is the set of {\em complementary regions}.

\subsection{Rational paths} \label{sec:rat-path}

The planar structure on the tree helps us to distinguish between ``irrational" and ``rational" paths: we saw that every complementary region  $e$ defines two paths $\partial^Le$ and $\partial^R e$ starting from $v_0$. By definition we call these paths {\em rational} and any other paths {\em irrational}. Observe that the set $\mc Q$  of rational paths is countable.

Remark that the set of paths $\mc P$ inherits a lexicographic order from the order in $S(n)$. 
If $\p_0$ and $\p_1$ are two paths with $p_0<p_1$ in the lexicographic order, we define
\begin{eqnarray}
[\p_0,\p_1]&\defeq&\{\p\mid p_0\leq p\leq p_1\}.\ ,\\
]\p_0,\p_1[&\defeq&\{\p\mid p_0< p< p_1\}\ .	
\end{eqnarray}

We can observe the following fact:

\begin{proposition}\label{pro:edg}
	Given  two distinct paths $\p,\mk q$  so that  $]\p,\mk q[$ is empty, then there exists a complementary region so that $\{\p,\mk q\}=\{\partial^Le,\partial^Re\}$. In particular $\p,\mk q$ are rational.
\end{proposition}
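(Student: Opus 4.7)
The plan is to identify the deepest vertex $v$ where $\p$ and $\mk q$ still agree, use it to pin down a candidate complementary region $e$, and then argue that each of the two paths is forced to hug one of the two boundary paths $\partial^L e, \partial^R e$ of this region. Such a vertex $v$ exists since both paths pass through $v_0$ and diverge at some finite stage. Just after $v$, $\p$ and $\mk q$ take two distinct outgoing edges $e_\p$ and $e_\mk q$.

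If $v \neq v_0$, then $v$ has a unique incoming edge $e_v$ and exactly two outgoing edges $\L(e_v), \R(e_v)$; hence, after possibly swapping $\p$ and $\mk q$, we may set $e_\p = \L(e_v)$ and $e_\mk q = \R(e_v)$, and take $e \defeq e_v$. If $v = v_0$, then $e_\p$ and $e_\mk q$ belong to $\{e^1,\ldots,e^N\}$, and the hypothesis $]\p,\mk q[=\emptyset$ forces them to be cyclically consecutive at $v_0$, since otherwise any further initial edge lying cyclically between them would serve as the first edge of a path strictly between $\p$ and $\mk q$ in the lexicographic order; thus $\{e_\p,e_\mk q\}=\{e^i,e^{i+1}\}$ for some $i$ and we set $e \defeq (e^i,e^{i+1}) \in S(0)$. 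In either case, the first edge past $v$ along $\partial^L e$ is precisely $e_\p$ and the first edge past $v$ along $\partial^R e$ is precisely $e_\mk q$.

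The heart of the proof is to show that $\p$ continues along $\partial^L e$ past $v$, i.e. that it takes the $\R$-turn at every subsequent vertex (the argument for $\mk q = \partial^R e$ being symmetric). I would argue by contradiction: assume $w$ is the first vertex past $v$ at which $\p$ takes the wrong turn, going along $\L(e_w)$ instead of $\R(e_w)$, where $e_w$ is the incoming edge at $w$. Pick any path $\p'$ that agrees with $\p$ up to $w$ and then takes $\R(e_w)$ followed by any continuation. Since $\p$ and $\p'$ first differ at $w$ (where $\p$ goes left and $\p'$ goes right), the planar cyclic order gives $\p < \p'$; and since $\p'$ still agrees with $\p$ at $v$ by choosing the left branch $\L(e_v) = e_\p$ while $\mk q$ takes $\R(e_v)$, we get $\p' < \mk q$. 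Thus $\p' \in ]\p,\mk q[$, contradicting the hypothesis. The conclusion $\{\p, \mk q\} = \{\partial^L e, \partial^R e\}$ (and therefore the rationality of both paths) follows immediately. The one subtle point, which I would spell out carefully, is the root case: one must verify that the cyclic order at $v_0$ does force the initial edges of $\p$ and $\mk q$ to be cyclically adjacent once a consistent linearization of the lexicographic order is fixed; the non-root case is essentially automatic from the trivalence of interior vertices.
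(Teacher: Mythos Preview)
Your proof is correct and is precisely the natural argument one would write down. The paper itself offers no proof of this proposition --- it is simply stated as an observation (``We can observe the following fact'') --- so there is nothing to compare against; your approach is the expected one. One cosmetic point: the swap of $\p$ and $\mk q$ is unnecessary, since the notation $]\p,\mk q[$ in the paper already presupposes $\p<\mk q$, which in turn forces $e_\p=\L(e_v)$ and $e_\mk q=\R(e_v)$ directly.
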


\section{Gap function, error terms and the Gap inequality}\label{sec:gap}

We now assume that we have a positive harmonic 1-form $\Phi$ defined on our tree. Given a complementary region $e$ we define its {\em gap} as
$$
\G_\Phi(e)=\frac{1}{2}\left(\mu_\phi\partial^Le)+\mu_\phi(\partial^Re)\right)
$$
Our first result is the following:
 \begin{theorem}{\sc [Gap inequality]}
We have
\begin{eqnarray}
\sum_{e\in \mathcal F} \G_\Phi(e)&\leq&\frac{1}{2}\partial\Phi\ .
\end{eqnarray}
\end{theorem}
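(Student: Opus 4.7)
The plan is to identify the sum $\sum_{e\in\mc F}(\mu_\Phi(\partial^L e)+\mu_\Phi(\partial^R e))$ with the $\mu_\Phi$-mass of the countable set $\mc Q$ of rational paths, and then to finish via the trivial monotonicity bound $\mu_\Phi(\mc Q)\leq\mu_\Phi(\mc P)=\partial\Phi$.

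First I would rewrite the gap using the atom formula \eqref{eq:atom}: since $\partial^L e$ and $\partial^R e$ are individual elements of $\mc P$, one has $\mu_\Phi(\{\partial^L e\})=\Phi_\infty(\partial^L e)$ and similarly for $\partial^R e$, so that
\[
2\sum_{e\in\mc F}\G_\Phi(e)=\sum_{e\in\mc F}\bigl(\mu_\Phi(\{\partial^L e\})+\mu_\Phi(\{\partial^R e\})\bigr).
\]
The structural heart of the argument is the claim that the doubletons $\{\partial^L e,\partial^R e\}$, for $e\in\mc F$, form a partition of $\mc Q$. Unpacking the definitions, $\partial^L e$ is a path whose infinite tail, past the edge $\L(e)$, consists of applying $\R$ at every subsequent vertex, whereas $\partial^R e$ has, past the edge $\R(e)$, a tail of $\L$'s. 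These two tail behaviours are mutually incompatible, so a single path cannot simultaneously be a $\partial^L$ and a $\partial^R$. Conversely the $\R$-tail characterises $\partial^L e$ (one recovers $\L(e)$ as the last non-$\R$ edge, and then $e$ as the edge preceding $\L(e)$ along the path to $v_0$), and the $\L$-tail characterises $\partial^R e$. Hence every rational path belongs to exactly one pair $\{\partial^L e,\partial^R e\}$; in particular $\partial^L e\neq\partial^R e$ for each region.

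Summing atoms over this partition and using countable additivity,
\[
\sum_{e\in\mc F}\bigl(\mu_\Phi(\{\partial^L e\})+\mu_\Phi(\{\partial^R e\})\bigr)=\mu_\Phi(\mc Q)\leq \mu_\Phi(\mc P)=\partial\Phi,
\]
and dividing by two gives the claimed inequality. The only slightly subtle step is the partition statement; this is however a direct consequence of the definitions of $\partial^L$, $\partial^R$ together with the tail characterisation above, and the very same argument handles the central regions $f=(e^i,e^{i+1})$ at the root since $\L f$, $\R f$ play exactly the roles of $\L(e)$, $\R(e)$ there. No input beyond the atom formula and monotonicity of $\mu_\Phi$ is required.
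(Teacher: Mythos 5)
Your argument is correct and coincides with the paper's first proof of the gap inequality (via the harmonic measure): the key point, that each rational path lies on the boundary of exactly one complementary region so that $\sum_{e\in\mc F}\G_\Phi(e)=\frac{1}{2}\mu_\Phi(\mc Q)\leq\frac{1}{2}\mu_\Phi(\mc P)=\frac{1}{2}\partial\Phi$, is exactly the paper's, and your tail characterisation just makes that partition claim explicit. (The paper also offers a second, combinatorial proof via the Green formula, which you do not need.)
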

We will refer to
\begin{eqnarray}
\E(\Phi)\defeq
\frac{1}{2}\partial\Phi-\sum_{e\in \mathcal F} \G_\Phi(e)\ 
\end{eqnarray}
as the error term of $\Phi$.
This gap inequality is almost a tautology and we shall give two immediate proofs. One emphasizes the probabilistic nature of the situation, the other uses the Green Formula and is very close to Bowditch's original idea.

\subsection{Gap and error  terms using the hamonic measure on the space of paths}
We can express the gap and error terms using the harmonic measure
\begin{proposition}
	We have the equalities
	$$
	\G(\Phi)=\frac{1}{2}\mu_\Phi(\mc Q), \ \ \E(\Phi)=\frac{1}{2}\mu_\Phi(\mc P\setminus\mc Q)\ .
	$$
\end{proposition}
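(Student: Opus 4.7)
\medskip\noindent
\emph{Proof plan.} My strategy is to rewrite $\sum_{e\in\mc F}\bigl(\mu_\Phi(\partial^L e)+\mu_\Phi(\partial^R e)\bigr)$ as $\mu_\Phi(\mc Q)$ by showing that each rational path is accounted for exactly once on the left-hand side. By (\ref{eq:atom}) each $\mu_\Phi(\partial^L e)$ is the atomic mass at the single path $\partial^L e$, and since $\mc Q$ is countable one has $\mu_\Phi(\mc Q)=\sum_{\p\in\mc Q}\mu_\Phi(\{\p\})$. Consequently, if I can verify that the map
\[
\mc F\times\{L,R\}\longrightarrow\mc Q,\qquad (e,L)\mapsto\partial^L e,\quad (e,R)\mapsto\partial^R e,
\]
is a bijection, then the displayed sum equals $\mu_\Phi(\mc Q)$ and, dividing by two, I obtain the first equality $\G(\Phi)=\tfrac{1}{2}\mu_\Phi(\mc Q)$.

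The heart of the argument is therefore this combinatorial bijection, which is the main obstacle in the proof. Surjectivity is exactly the definition of $\mc Q$. For injectivity I would use the tail-behaviour of the paths: unpacking $\partial^L e=\bigcup_n\R^n\L(e)$ shows that this path reaches the endpoint of $e$, performs a single left turn via $\L(e)$, and then turns right at every subsequent vertex; dually $\partial^R e$ is eventually all left turns. Hence no infinite path is simultaneously of type $\partial^L$ and of type $\partial^R$. Moreover, given $\p=\partial^L e$, the position $|e|+1$ is the last index at which $\p$ turns left, which determines $|e|$ and hence $e$ as the $|e|$-th edge of $\p$; an analogous uniqueness holds for $\partial^R$. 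The root case $e\in S(0)$ requires a small adjustment, since such an $e$ is a pair $(e^i,e^{i+1})$ rather than an edge: there $\partial^L e$ starts with $e^i$ and then turns right forever, so $e^i=\L e$ is read off from the first edge of $\p$ and $e$ is thereby recovered.

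Granting the bijection, the first equality is as above. For the second, I combine it with $\mu_\Phi(\mc P)=\partial\Phi$ from (\ref{eq:atom}) and the inclusion $\mc Q\subset\mc P$ to obtain
\[
\E(\Phi)=\tfrac{1}{2}\partial\Phi-\G(\Phi)=\tfrac{1}{2}\bigl(\mu_\Phi(\mc P)-\mu_\Phi(\mc Q)\bigr)=\tfrac{1}{2}\mu_\Phi(\mc P\setminus\mc Q),
\]
which finishes the proof. Beyond the indexing verification, the argument is a rearrangement of definitions together with the identity $\partial\Phi=\mu_\Phi(\mc P)$.
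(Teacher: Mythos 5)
Your proof is correct and follows the same route as the paper: both reduce the first identity to the fact that each rational path is the boundary path of exactly one complementary region (with a fixed side $L$ or $R$), use the atomic-mass identity $\mu_\Phi(\{\p\})=\Phi_\infty(\p)$ together with countability of $\mc Q$, and deduce the error-term identity from $\mu_\Phi(\mc P)=\partial\Phi$. The only difference is that you spell out the injectivity of $(e,L/R)\mapsto\partial^{L/R}e$ via the last-left-turn/tail-behaviour argument, which the paper simply asserts.
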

\begin{proof}
	 Since $\mu_\Phi(\mc P)=\partial\Phi$, the second equality is a consequence of the first. To prove the first we have to notice that any rational path appears  in the boundary of exactly one complementary region,
	 Thus
	 $$
	 \G(\Phi)=\sum_{e\in\mc F}\G_\Phi(e)=\frac{1}{2}\sum_{\p\in\mc Q} \Phi_\infty(\p)=\frac{1}{2}\mu_\Phi(\mc Q)\ .
	 $$
\end{proof}

\subsection{The gap inequality from the Green Formula}

For any complementary region  $f$ 
and $n\in\mathbb N$ let
$$
\G_\Phi^n(f)\defeq \frac{1}{2} \left(\Phi(\L^n\cdotp\R\cdotp f)+\Phi(\R^n\cdotp\L\cdotp f)\right).
$$
Then obviously, from the positivity of $\Phi$, we have
$$
\G_\phi^n(f)\geq \G_\Phi(f)\defeq\liminf_{n\to\infty}\G_\Phi^n(f).
$$
We can now revisit the Green Formula by rearranging its term to get

\begin{proposition}
We have
\begin{eqnarray}
\sum_{f\in \mathcal S(p)\mid p< n} \G_\Phi^{n-p}(e)&=&\frac{1}{2}\partial\Phi\ ,\label{eq:gapeq}
\end{eqnarray}
\end{proposition}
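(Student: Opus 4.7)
The plan is to rearrange the Green formula applied at the sphere $S(n+1)$, so as to recognise in it the doubled left-hand side of \eqref{eq:gapeq} with every edge counted exactly once.

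First I unpack the notation. For any complementary region $f\in S(p)$, both $\L f$ and $\R f$ lie in $S(p+1)$, whether $p\geq 1$ (the two edges out of $v^+(f)$) or $p=0$ (under the convention $\L(e^i,e^{i+1}) = e^i$, $\R(e^i,e^{i+1}) = e^{i+1}$). Iterating, all edges $\L^{n-p}\R f$ and $\R^{n-p}\L f$ appearing in the expansion of $2G_\Phi^{n-p}(f)$ lie in $S(n+1)$. Doubling \eqref{eq:gapeq} and invoking the Green formula $\sum_{e\in S(n+1)}\Phi(e) = \partial\Phi$ therefore reduces the proposition to the combinatorial claim that
\begin{equation*}
\bigl\{\L^{n-p}\R f,\ \R^{n-p}\L f : f\in S(p),\ 0\leq p < n\bigr\}
\end{equation*}
is a bijective enumeration of $S(n+1)$.

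To verify this, I parametrize $e'\in S(n+1)$ by its path from the root: the initial edge $e^c$ out of $v_0$ followed by a word $s_1\cdots s_n\in\{L,R\}^n$ recording whether each further branching takes $\L$ or $\R$. Unwinding the definitions, $e' = \L^{n-p}\R e_p$ (with $e_p\in S(p)$, $p\geq 1$) iff $s_p = R$ and $s_{p+1}=\cdots=s_n = L$; symmetrically, $e' = \R^{n-p}\L e_p$ iff $s_p = L$ and $s_{p+1}=\cdots=s_n = R$; and for $p=0$ one of the two root-sector cases contributes precisely when the word $s_1\cdots s_n$ is constantly $L$ or constantly $R$.

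Let $q$ be the smallest index such that $s_q = s_{q+1}=\cdots = s_n$, i.e.\ the onset of the terminal constant run of the word. If $q\geq 2$, the unique triple producing $e'$ has $p = q-1$, $f = e_{q-1}$, with side dictated by $s_q$; if $q = 1$, the word is constant and the unique contribution comes from the matching root-sector at $p = 0$. Hence every $e'\in S(n+1)$ is hit exactly once, and the Green formula delivers the identity. The only real book-keeping issue is keeping the $p=0$ term parallel to the case $p\geq 1$, which is precisely what the root-sector convention arranges; once this is in place the rest of the argument is a single straightforward inspection of the trailing run of the word.
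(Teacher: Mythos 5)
Your argument is exactly the paper's: double the identity, apply the Green formula on $S(n+1)$, and observe that every edge of $S(n+1)$ is of the form $\L^{n-p}\R f$ or $\R^{n-p}\L f$ for a unique complementary region $f\in S(p)$ with $p<n$. The only difference is that you verify this enumeration explicitly via the terminal constant run of the $\{L,R\}$-word, a fact the paper merely asserts, so your write-up is correct and, if anything, more complete.
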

The gap inequality follows from this formula since $\G_\Phi(e)\leq \G^n_\phi(e)$.

\begin{proof} For every $n$, we have  the Green formula
\begin{align}
  \sum_{x\in \mathcal S(n)} \Phi(x)= \partial\Phi.
\end{align}
For all $n$, recall that
\begin{align}
 \G_\Phi^n(e)=\frac{1}{2}\left(\Phi(\L^n\cdotp\R\cdotp e)+\Phi(\R^n\cdotp\L\cdotp e)\right)\ .
\end{align}
Recall also that given an element $z$ in $\mathcal S(n+1)$ there exist unique element $z$ in $\mathcal S(p)$  with $p<n$  with 
$z\in \partial y$. or in other words $z=\L^{n-p}\cdotp\R\cdotp y$ or $z=\R^{n-p}\cdotp\L\cdotp y$.
Then we have
\begin{align}
  \sum_{x\in\mathcal S(p)\mid p<n} \G_\Phi^{n-p}(x)=& \frac{1}{2} \sum_{x\in\mathcal S(p)\mid p<n}  \left(\Phi(\L^{n-p}\cdotp\R\cdotp x)+\Phi(\R^{n-p}\cdotp\L\cdotp x)\right)\cr
   &=  \demi\sum_{y\in \mathcal S(n+1)} \Phi(y)=\demi \partial\Phi.
  \end{align}

\end{proof}

\section{The error term and random variables on $\mc P$}\label{sec:err}

The terminology "rational" and "irrational" paths seems to suggest that the measure of rational paths ought to be zero. This is indeed the case when the probability is balanced: we have equal probability to take the left or right edge. However, we shall see on the contrary that in the geometric context which is underlying McShane's identity the measure of irrational paths is zero.

We develop in this section a framework to understand the error term as the measure on some set on the circle. More precisely, we describe an  "increasing embedding" of $\mc P$ in $\mathbb R/\partial\Phi\mathbb Z$. This will be closer to the original point of view of Mc Shane's and will also us to define the error term as the Lebesgue measure of some set in the circle, which we call the Birman--Series phenomenon in Theorem \ref{cor:ErrXinf}.

\subsection{Gap and  error terms using random variables on  $\mc P$}
\label{sec:rv}
The Green Formula
$$
\sum_{e\in S(n)}\Phi(e)=\partial\Phi,
$$
as well as the circular order on $S(n)$
can be used to define a partition of $\mathbb R/\partial\Phi\mathbb Z$ in intervals labelled successively by edges in $S(n)$ and of length $\Phi(e)$. Such a partition is unique up to translation.

Using the extremities of this evolving partition rather than the length of the corresponding intervals, we express in Theorem \ref{cor:ErrXinf} the values of the error term as the Lebesgue measure of some set $\overline X_\infty$.

In our future geometric application, the vanishing of the Lebesgue measure of this set -- and consequently the vanishing of the error  term -- will be an  application of the Coding Theorem \ref{theo:exten} and   the Birman--Series Theorem \cite{Birman:1985} which is a cornerstone of the proof of McShane's type identity as in \cite{McShane:1998} and \cite{Labourie:2009wi}.

Taking the mid points of each such interval gives rise to an increasing  random variable $Y_n$ on $S(n)$, which is unique up to translation when characterised by
$$ 
Y_n(\p_0)-Y_n(\p_1)=\frac{1}{2}(\phi(\p^n_0)+\phi(\p^n_1)),
$$ when  $\p_0<\p_1$ and there is no edge between $\p^n_0$ and $\p^n_1$.

We may consider $Y_n$ as a random variable on $\mc P$, and 
increasing $n$, obtain a variable $X_\infty$. Finally the goal of this section is to define the error term in terms of $X_\infty$.

\subsubsection{The variables $Y_n$}

Let us choose one ``initial'' complementary region $f_0$ whose boundary contains the root. Then the fact that the tree is planar gives a natural ordering of the edges of $S(n)$.

For every $n$, we construct a function $Y_n$ on $S(n)$  with values in $\mathbb R/\partial\Phi\mathbb Z$, by
\begin{eqnarray*}
        Y_n(e)&\defeq& \frac{1}{2}\Phi(e)+\sum_{f\in S(n)\mid f<e}\Phi(f)\\ 
        &=&-\frac{1}{2} \Phi(e)+\sum_{f\in S(n)\mid f\leq e}\Phi(f).
\end{eqnarray*}

The following proposition is an immediate consequence of the definition of $Y_n$ and the positivity of $\Phi$.

\begin{proposition}\label{pro:yn}   For $e$ and $f$ in $S(n)$ with $e>f$ , we have the inequalities
\begin{eqnarray}
	Y_{n+1}(\R\cdotp f)  \leq Y_{n+1}(\L\cdotp e)&\leq& Y_n(e)\leq Y_{n+1}(\R\cdotp e)\ , \\
	%\left\vert Y_{n+1}(\R(e))-Y_{n+1}(\L(e))\right\vert&\leq& \frac{1}{2}\Phi(e)\ ,\\
		\left\vert Y_{n+1}(g)-Y_{n}(e))\right\vert&\leq& \frac{1}{2}\Phi(h)\, \ \hbox{ for } \{g,h\}=\{\R(e),\L(e)\}\ ,\label{eq:Yrg}
	\end{eqnarray}
\end{proposition}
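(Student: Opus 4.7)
The plan is a direct computation from the definition of $Y_n$, using only harmonicity of $\Phi$ together with the way the planar order on $S(n+1)$ is induced from that on $S(n)$.

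First I would record the key ordering fact coming from the planar structure: the cyclic order on $S(n+1)$ is obtained from the order on $S(n)$ by replacing each parent $f\in S(n)$ by the consecutive ordered pair $(\L f,\R f)$. After linearizing with respect to the fixed initial complementary region $f_0$, this gives for every $e\in S(n)$,
\begin{equation*}
  \{g\in S(n+1)\mid g<\L e\} \;=\; \bigsqcup_{f\in S(n),\,f<e}\{\L f,\R f\}.
\end{equation*}
Combined with the harmonicity identity $\Phi(f)=\Phi(\L f)+\Phi(\R f)$, this yields
\begin{equation*}
  \sum_{g\in S(n+1),\,g<\L e}\Phi(g) \;=\; \sum_{f\in S(n),\,f<e}\Phi(f).
\end{equation*}

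Next I would substitute into the definition of $Y_{n+1}$ to obtain
\begin{align*}
  Y_{n+1}(\L e) &= \tfrac{1}{2}\Phi(\L e)+\sum_{f<e}\Phi(f),\\
  Y_{n+1}(\R e) &= \tfrac{1}{2}\Phi(\R e)+\Phi(\L e)+\sum_{f<e}\Phi(f).
\end{align*}
Subtracting $Y_n(e)=\tfrac{1}{2}\Phi(e)+\sum_{f<e}\Phi(f)$ and invoking harmonicity once more gives the clean identities
\begin{equation*}
  Y_n(e)-Y_{n+1}(\L e) \;=\; \tfrac{1}{2}\Phi(\R e), \qquad Y_{n+1}(\R e)-Y_n(e) \;=\; \tfrac{1}{2}\Phi(\L e),
\end{equation*}
both nonnegative by the positivity of $\Phi$ on positive edges. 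This immediately delivers the inner inequalities $Y_{n+1}(\L e)\leq Y_n(e)\leq Y_{n+1}(\R e)$, and, reading off the two cases $\{g,h\}=\{\L e,\R e\}$, the bound \eqref{eq:Yrg} (with equality, in fact).

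For the remaining leftmost inequality $Y_{n+1}(\R f)\leq Y_{n+1}(\L e)$ when $f<e$ in $S(n)$, the same telescoping yields
\begin{equation*}
  Y_{n+1}(\L e)-Y_{n+1}(\R f) \;=\; \tfrac{1}{2}\Phi(\L e)+\tfrac{1}{2}\Phi(\R f)+\sum_{\R f<g<\L e,\;g\in S(n+1)}\Phi(g),
\end{equation*}
which is nonnegative term by term. The only real subtlety in the whole argument is bookkeeping: since $Y_n$ takes values in $\mathbb R/\partial\Phi\mathbb Z$, one must pin down the linearization using the basepoint $f_0$ and verify that children of consecutive parents remain adjacent at the next level, which is essentially built into the definition of a planar tree. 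I therefore expect the bulk of the write-up effort to go into fixing this ordering convention cleanly, not into the algebra, which is essentially forced by harmonicity.
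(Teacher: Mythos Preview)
Your argument is correct and is exactly the computation the paper has in mind when it declares the proposition ``an immediate consequence of the definition of $Y_n$ and the positivity of $\Phi$''; you have simply spelled out the telescoping that harmonicity forces. Your observation that \eqref{eq:Yrg} is in fact an equality is a nice bonus, and is precisely what the paper tacitly uses in the proof of Proposition~\ref{pro:xinfty}.
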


\subsubsection{The variables $X_n$ and $X_\infty$}
We then define for every path $\p$ in the set $\mc P$ of infinite embedded paths starting from the root.
\begin{align}
  X_n(\p)\defeq Y_n(\p^n)).
\end{align}
Anticipating Proposition \ref{pro:xinfty}, we shall see that the variables $X_n$ converges uniformly to a variable $X_\infty$.

Let us first prove the following Proposition:

\begin{proposition}\label{pro:opedges}
Let $\p$ be a path. Let $\hat \p=\{\hat \p^n\}_{n\geq 2}$ be the sequence of edges so that $\hat \p_n\in S(n)$ and $\{\p^n, \hat \p^n\}=\{\R(\p^{n-1}),\L(\p^{n-1})\}$.
Then
$$
\sum_{n=1}^\infty\Phi(\hat \p^n)<\infty\ ,
$$
and in particular
$$
\lim_{n\to\infty}\Phi(\hat \p^n)=0\ .
$$
\end{proposition}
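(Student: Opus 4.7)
The plan is to exploit harmonicity of $\Phi$ to turn the series into a telescoping sum, then use positivity of $\Phi$ on positive edges to bound the partial sums.

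First, I would observe that for each $n \geq 2$, the vertex $v$ which is the positive endpoint of $\p^{n-1}$ is not the root, hence is trivalent, with incoming edge $\p^{n-1}$ (after reversing orientation) and outgoing edges $\R(\p^{n-1})$ and $\L(\p^{n-1})$. The harmonicity relation at $v$ reads
\begin{align}
\Phi(\p^{n-1}) = \Phi(\R(\p^{n-1})) + \Phi(\L(\p^{n-1})).
\end{align}
Since $\{\p^n,\hat\p^n\} = \{\R(\p^{n-1}),\L(\p^{n-1})\}$, this rewrites as
\begin{align}
\Phi(\hat\p^n) = \Phi(\p^{n-1}) - \Phi(\p^n).
\end{align}

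Next, I would sum from $n=2$ to $n=N$, obtaining the telescoping identity
\begin{align}
\sum_{n=2}^{N} \Phi(\hat\p^n) = \Phi(\p^1) - \Phi(\p^N).
\end{align}
Since $\Phi$ is positive on positive edges, the right-hand side is bounded above by $\Phi(\p^1)$, and the partial sums are monotone increasing. Therefore the series converges, with total sum at most $\Phi(\p^1) \leq \partial\Phi$. (The $n=1$ term listed in the statement, if taken literally, only adds a single nonnegative number $\Phi(\hat\p^1)$ bounded by $\partial\Phi$, so the final conclusion is unaffected; this is really just a mild indexing matter since $\hat\p^n$ is naturally defined only for $n \geq 2$.)

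Finally, convergence of the series forces the general term to tend to zero, giving $\lim_{n\to\infty}\Phi(\hat\p^n) = 0$. The argument is essentially tautological once harmonicity is invoked, so there is no genuine obstacle; the only thing worth flagging is that the root may have valence different from $3$, which is why the telescoping starts at $n=2$ rather than $n=1$.
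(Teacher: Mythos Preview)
Your proof is correct and essentially identical to the paper's: both use harmonicity to write $\Phi(\hat\p^n)=\Phi(\p^{n-1})-\Phi(\p^n)$, telescope, and invoke positivity of $\Phi$ to bound the partial sums by $\Phi(\p^1)$. Your remark on the indexing discrepancy ($n=1$ versus $n\geq 2$) is also apt and does not affect the conclusion.
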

\begin{proof}
	Since $\Phi(\p^p)+\Phi(\hat\p^p)=\Phi(\p^{p-1})$ it  follows by induction that for all $p$,
$$
\Phi(\p^p)+\sum_{n=2}^p\Phi(\hat \p^n)=\Phi(\p^{p-1})+\sum_{n=2}^{p-1}\Phi(\hat \p^n)=\Phi(\p_1 )\ .
$$
The result follows since $\Phi$ is positive.
\end{proof}

The main observation of this section is 
\begin{proposition}\label{pro:xinfty} We have 
\begin{enumerate}
\item The random variable $X_n$ is increasing with respect to the lexicographic ordering on $\mc P$.
	\item The sequence $X_n$ converges pointwise to a random variable  $X_\infty$. Moreover
\begin{eqnarray}
X_\infty(\p_1)-X_\infty(\p_0)&=&\mu_\Phi\left([\p_0,\p_1]\right)-
\frac{1}{2}\left(\Phi_\infty(\p_0)+\Phi_\infty(\p_1)\right)\ ,\label{eq:Xmu1}\\
&=&\mu_\Phi\left(]\p_0,\p_1[\right)+\frac{1}{2}\left(\Phi_\infty(\p_0)+\Phi_\infty(\p_1)\right)\ \label{eq:Xmu2}.
\end{eqnarray}
\end{enumerate}
\end{proposition}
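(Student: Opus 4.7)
The plan is to derive both statements directly from the telescoping definition of $Y_n$, using Proposition~\ref{pro:yn} for the analytic estimates, Proposition~\ref{pro:opedges} for summability, and countable additivity of the harmonic measure $\mu_\Phi$ to identify the limit.

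For the monotonicity statement~(1), suppose $\p_0\leq\p_1$ in the lexicographic order on $\mc P$. If the $n$th initial edges agree, $\p_0^n = \p_1^n$, then $X_n(\p_0)=X_n(\p_1)$; otherwise $\p_0^n < \p_1^n$ in the cyclic order on $S(n)$ induced by the planar structure, and the defining formula
$$Y_n(e)=\tfrac{1}{2}\Phi(e)+\sum_{f\in S(n),\,f<e}\Phi(f),$$
together with positivity of $\Phi$, yields $X_n(\p_0)<X_n(\p_1)$.

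For the convergence in~(2), I first establish a Cauchy estimate. Comparing $Y_{n+1}(\p^{n+1})$ with $Y_n(\p^n)$ and invoking (\ref{eq:Yrg}) with $e=\p^n$, $g=\p^{n+1}$, $h=\hat\p^{n+1}$ gives
$$|X_{n+1}(\p)-X_n(\p)|=|Y_{n+1}(\p^{n+1})-Y_n(\p^n)|\leq \tfrac{1}{2}\Phi(\hat\p^{n+1}),$$
which is summable in $n$ by Proposition~\ref{pro:opedges}. Hence $(X_n(\p))$ is Cauchy for every $\p$ and converges to a limit $X_\infty(\p)$.

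The two identities (\ref{eq:Xmu1}) and (\ref{eq:Xmu2}) differ by $\Phi_\infty(\p_0)+\Phi_\infty(\p_1)=\mu_\Phi(\{\p_0\})+\mu_\Phi(\{\p_1\})$ via (\ref{eq:atom}), so it suffices to prove (\ref{eq:Xmu2}). For $n$ large enough that $\p_0^n\neq\p_1^n$, telescoping the cumulative sums in the definition of $Y_n$ gives
$$X_n(\p_1)-X_n(\p_0)=\tfrac{1}{2}\bigl(\Phi(\p_0^n)+\Phi(\p_1^n)\bigr)+\sum_{\p_0^n<f<\p_1^n}\Phi(f).$$
The last sum is the $\mu_\Phi$-measure of the cylinder $C_n\defeq\{\q\in\mc P : \p_0^n<\q^n<\p_1^n\}$; the $C_n$ form an increasing family whose union is the open interval $\,]\p_0,\p_1[\,$, because any path strictly between $\p_0$ and $\p_1$ must disagree with both at some finite depth, at which point its $n$th edge lies strictly between $\p_0^n$ and $\p_1^n$. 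By countable additivity the sum tends to $\mu_\Phi(\,]\p_0,\p_1[\,)$, while $\Phi(\p_j^n)\to\Phi_\infty(\p_j)$ by definition. Passing to the limit yields (\ref{eq:Xmu2}). The only delicate point is the set-theoretic identification $\bigcup_n C_n=\,]\p_0,\p_1[\,$, which justifies the measure-theoretic passage to the limit; this is the main (though rather mild) obstacle.
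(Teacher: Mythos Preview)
Your proof is correct and follows essentially the same route as the paper: monotonicity from Proposition~\ref{pro:yn}, pointwise convergence from the telescoping estimate \eqref{eq:Yrg} combined with Proposition~\ref{pro:opedges}, and the identities from the finite-level formula for $X_n(\p_1)-X_n(\p_0)$ passed to the limit via the harmonic measure. The only cosmetic difference is that you prove \eqref{eq:Xmu2} first (using the strict-inequality sum and an increasing union of cylinders) while the paper proves \eqref{eq:Xmu1} first (using the non-strict sum, which converges to $\mu_\Phi([\p_0,\p_1])$ as a decreasing intersection); either way the other identity follows from \eqref{eq:atom}.
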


\begin{proof} The first item is a consequence of Proposition \ref{pro:yn}
Notice now, using the notation of Proposition \ref{pro:opedges}, that  Equation  \eqref{eq:Yrg} implies  that 
\begin{equation}
\left\vert X_{n+1}(\p)-X_n(\p)\right\vert = \Phi(\hat\p^n)\ .
\end{equation}
Thus by Proposition \ref{pro:opedges}, $X_n$ converges pointwise. Finally, let $\p_0$ and $\p_1$ be two paths with $p_0<p_1$ in the lexigraphic order.
Let $[\p_0,\p_1]\defeq\{\p\mid p_0\leq p\leq p_1\}$. Then for $n$ large enough

\begin{eqnarray}
X_n(\p_1)-X_n(\p_0)
=\sum_{\stackrel{e\in S(n)}{ \p^n_0
\leq e\leq \p^n_1}}\Phi(e)-\frac{1}{2}\left(\Phi(\p^n_0)+\Phi(\p^n_1)\right).
\end{eqnarray}
Thus
$$
X_\infty(\p_1)-X_\infty(\p_0)=\mu_\Phi[\p_0,\p_1]-\frac{1}{2}\left(\Phi_\infty(\p_0)+\Phi_\infty(\p_1)\right).
$$
This proves equation \eqref{eq:Xmu1}, equation \eqref{eq:Xmu2} follows then from equation \eqref{eq:atom}.
\end{proof}

\subsubsection{The error term using $X_\infty$}
The set $X_\infty(\mc P)$ might be not be closed, for instance when $X_\infty$ is not continuous. Let us then define for every $\p\in \mc P$, 
\begin{eqnarray*}
X_\infty^L(\p)&\defeq&\sup_{\mathfrak q<\p}(X_\infty(\mk q))\ ,\\
X_\infty^R(\p)&\defeq&\inf_{\mathfrak q>\p}(X_\infty(\mk q))\ ,\\
\overline X_\infty&\defeq&\overline{\bigcup _{\p\in\mc P\setminus\mc Q}	[X^R_\infty(\p)),X_\infty^L(\p)}]\label{eq:gapX1}\ .
\end{eqnarray*}
Then we can express the error term using the variable $X_\infty$:
\begin{theorem}
{\sc [Birman--Series phenomenon]} \label{cor:ErrXinf}
	\begin{eqnarray}
	\lambda\left(\overline X_\infty\right)&=&2\E(\Phi).
	\end{eqnarray}	
	
\end{theorem}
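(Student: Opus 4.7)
The plan is to exhibit the complement $\mathbb{R}/\partial\Phi\mathbb{Z}\setminus\overline X_\infty$ as a disjoint union of open ``super-gap'' intervals $G_e$, one for each complementary region $e\in\mc F$, each of Lebesgue length $2\G_\Phi(e)$. Summing via $\sum_e 2\G_\Phi(e)=2\G(\Phi)=\mu_\Phi(\mc Q)$ (from the proposition of Section~\ref{sec:gap}) and subtracting from the total length $\partial\Phi=\mu_\Phi(\mc P)$ of the circle then yields $\lambda(\overline X_\infty)=\mu_\Phi(\mc P\setminus\mc Q)=2\E(\Phi)$.

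First I would set $G_e\defeq (X_\infty^L(\partial^L e),\,X_\infty^R(\partial^R e))$ and compute its length. Proposition~\ref{pro:edg} says that $]\partial^L e,\partial^R e[$ is empty, so equation~\eqref{eq:Xmu2} yields $X_\infty(\partial^R e)-X_\infty(\partial^L e)=\G_\Phi(e)$. The same proposition, together with the fact that each rational path bounds a unique complementary region, implies that $\partial^L e$ has no left-neighbour in $\mc P$ and $\partial^R e$ none on the right. Since the atoms of $\mu_\Phi$ are countable and the order-interval $(\mk q,\partial^L e)$ is non-empty hence uncountable, one can choose non-atomic irrational paths $\mk q_k^-\nearrow \partial^L e$. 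Plugging into~\eqref{eq:Xmu2} and using $\mu_\Phi(]\mk q_k^-,\partial^L e[)\searrow 0$ (by monotone convergence of measures on a nested sequence with empty intersection) and $\Phi_\infty(\mk q_k^-)=0$, one obtains $X_\infty^L(\partial^L e)=X_\infty(\partial^L e)-\tfrac12\Phi_\infty(\partial^L e)$, and symmetrically $X_\infty^R(\partial^R e)=X_\infty(\partial^R e)+\tfrac12\Phi_\infty(\partial^R e)$. Adding gives $\lambda(G_e)=\Phi_\infty(\partial^L e)+\Phi_\infty(\partial^R e)=2\G_\Phi(e)$.

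Next I would verify the two disjointness claims. The $G_e$ are pairwise disjoint by monotonicity of $X_\infty$ and the distinctness of the boundary paths of different regions. To see $\overline X_\infty\cap G_e=\varnothing$, note that by Proposition~\ref{pro:edg} any $\mk q\in\mc P\setminus\mc Q$ satisfies either $\mk q<\partial^L e$ or $\mk q>\partial^R e$, so by monotonicity $X_\infty(\mk q)\leq X_\infty^L(\partial^L e)$ or $X_\infty(\mk q)\geq X_\infty^R(\partial^R e)$; the atom intervals $[X_\infty^L(\mk q),X_\infty^R(\mk q)]$ obey the same dichotomy, and the property is preserved by taking the closure. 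For the converse $\mathbb{R}/\partial\Phi\mathbb{Z}\setminus\bigcup_e G_e\subseteq \overline X_\infty$, take $t$ outside every $G_e$. If $t=X_\infty(\p)$ with $\p$ irrational, then $t\in\overline X_\infty$ by definition. If $t=X_\infty(\p)$ with $\p\in\mc Q$, the computation above places $X_\infty(\p)$ strictly inside $G_e$ when $\Phi_\infty(\p)>0$, contradicting $t\notin G_e$; and when $\Phi_\infty(\p)=0$, $t$ coincides with $X_\infty^L(\partial^L e)$ or $X_\infty^R(\partial^R e)$, which are limits of non-atomic irrational images and therefore lie in $\overline X_\infty$. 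If $t\notin X_\infty(\mc P)$, the only gaps of the image of $X_\infty$ on $\mc P$ are the open intervals $(X_\infty(\partial^L e),X_\infty(\partial^R e))\subset G_e$, again a contradiction.

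Putting everything together,
\[
\lambda(\overline X_\infty)=\partial\Phi-\sum_{e\in\mc F}\lambda(G_e)=\partial\Phi-2\G(\Phi)=\mu_\Phi(\mc P)-\mu_\Phi(\mc Q)=\mu_\Phi(\mc P\setminus\mc Q)=2\E(\Phi).
\]
The main obstacle is the first step: certifying that $\partial^L e$ and $\partial^R e$ are actually accessible from outside the region $e$ by non-atomic irrational sequences. This is precisely where the ``exactly one complementary region'' clause of Proposition~\ref{pro:edg} is essential (to rule out a rational neighbour) and where countability of $\mc Q$ and of the atom set is used (to extract non-atomic irrationals inside the resulting uncountable order-intervals).
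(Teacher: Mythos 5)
Your argument is correct and follows essentially the same route as the paper: it reproduces Proposition \ref{pro:Xinfty} (the half-atom identities $X_\infty^L(\partial^Le)=X_\infty(\partial^Le)-\tfrac12\Phi_\infty(\partial^Le)$, $X_\infty^R(\partial^Re)=X_\infty(\partial^Re)+\tfrac12\Phi_\infty(\partial^Re)$ and the decomposition of the complement of $\overline X_\infty$ into the intervals $]X^L_\infty(\partial^Le),X^R_\infty(\partial^Re)[$ of length $2\G_\Phi(e)$) and then subtracts from $\partial\Phi$. The only cosmetic difference is that you select non-atomic irrational approximating sequences to kill the $\Phi_\infty(\mk q_k^-)$ terms, whereas the paper observes that $\Phi_\infty(\p_n)\to 0$ automatically along any strictly monotone sequence via Equation \eqref{eq:Xmu2}.
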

This will be a consequence of the following proposition:

\begin{proposition}\label{pro:Xinfty}
Suppose that $\p$ is a limit of a strictly increasing sequence $\{\p_n\}_{n\in\mathbb N}$, and $\mk q$ is limit of a strictly decreasing sequence $\{\mk q_n\}_{n\in\mathbb N}$. Then
	\begin{eqnarray}
	X_\infty(\p)-X^-_\infty(\p)&=&\frac{1}{2}\Phi_\infty(\p)\,  \label{eq:gapX2}\\
		X^+_\infty(\mk q)-X_\infty(\mk q)&=&\frac{1}{2}\Phi_\infty(\mk q)\,  \label{eq:gapX2+}\\
     \left(\mathbb R/\partial\Phi\mathbb Z\right)\setminus \overline X_\infty&=&\bigsqcup_{e\in\mc F}]X^L_\infty(\partial^Le),X^R_\infty(\partial^Re)[\ \label{eq:gapX3}.
	\end{eqnarray}
	\end{proposition}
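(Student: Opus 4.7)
Equations (1) and (2) follow by passing to the limit in the formulas of Proposition \ref{pro:xinfty}. For (1), apply equation \eqref{eq:Xmu1} with $\p_0 = \p_n$ and $\p_1 = \p$:
\[
X_\infty(\p) - X_\infty(\p_n) = \mu_\Phi\bigl([\p_n,\p]\bigr) - \tfrac{1}{2}\bigl(\Phi_\infty(\p_n) + \Phi_\infty(\p)\bigr).
\]
The left-hand side converges to $X_\infty(\p) - X^L_\infty(\p)$: by monotonicity of $X_\infty$ (Proposition \ref{pro:xinfty}(1)) and the observation that any $\mk q < \p$ satisfies $\mk q < \p_n$ for $n$ large (compare common prefixes), one has $X_\infty(\p_n) \uparrow X^L_\infty(\p)$. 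On the right, continuity of the finite measure $\mu_\Phi$ along the decreasing sequence $[\p_n,\p] \downarrow \{\p\}$ yields $\mu_\Phi([\p_n,\p]) \to \mu_\Phi\{\p\} = \Phi_\infty(\p)$. The essential point is $\Phi_\infty(\p_n) \to 0$: letting $k_n$ denote the length of the common prefix of $\p_n$ and $\p$, the first edge where $\p_n$ departs from $\p$ is precisely $\hat\p^{k_n+1}$, so $\Phi_\infty(\p_n) \leq \Phi(\hat\p^{k_n+1})$, and since $\p_n \to \p$ forces $k_n \to \infty$, Proposition \ref{pro:opedges} gives $\Phi(\hat\p^{k_n+1}) \to 0$. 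The limit is therefore $\Phi_\infty(\p) - \tfrac{1}{2}\Phi_\infty(\p) = \tfrac{1}{2}\Phi_\infty(\p)$. Equation (2) is symmetric, using \eqref{eq:Xmu2} (or \eqref{eq:Xmu1}) and a strictly decreasing sequence.

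For (3), the strategy is to show the open intervals $J_e \defeq \,]X^L_\infty(\partial^L e), X^R_\infty(\partial^R e)[\,$ are (a) each disjoint from $\overline X_\infty$, (b) pairwise disjoint, and (c) collectively exhaust the complement of $\overline X_\infty$. For (a) and (b), the crucial input is Proposition \ref{pro:edg}: $\partial^L e$ and $\partial^R e$ are lex-adjacent, and adjacency forces rationality. Hence any irrational $\mk r$ is strictly less than $\partial^L e$ or strictly greater than $\partial^R e$ and, not being adjacent to either, admits an intermediate path $\mk q$ whose $X_\infty$-value squeezes $[X^L_\infty(\mk r), X^R_\infty(\mk r)]$ outside $J_e$. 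Since $J_e$ is open, disjointness from the defining union transfers to its closure $\overline X_\infty$. Pairwise disjointness of $J_e$ and $J_{e'}$ follows from the same comparison applied to the two adjacent pairs, using that the corresponding lex-order gaps are either identical or disjoint.

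The main obstacle is (c). The natural approach is to associate to each $x \in \mathbb R/\partial\Phi\mathbb Z$ a tracking path $\p^*$ obtained by descending through the nested partitions of the circle labelled by $S(n)$ introduced in Section \ref{sec:rv}. If $x$ is interior to the partition at every level, this yields a single path $\p^*$ with $X_\infty(\p^*) = x$; then $x \notin \overline X_\infty$ forces $\p^*$ to be rational, and $x$ lies in $J_e$ for the adjoining complementary region. If $x$ is a boundary point at some finite level $n$, the corresponding tree vertex exhibits a region $e$ whose two boundary paths $\partial^L e, \partial^R e$ flank $x$; using \eqref{eq:Xmu2} together with the equalities (1) and (2) already established, one checks $x \in J_e$. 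The delicate part is to prove this dichotomy is exhaustive, ruling out a third scenario in which $x$ is accumulated from irrational $X_\infty$-values without itself lying in $\overline X_\infty$; handling this correctly is what will also yield the length identity $\lambda(\overline X_\infty) = 2\E(\Phi)$ of Theorem \ref{cor:ErrXinf} by summing $\lambda(J_e) = 2\G_\Phi(e)$ over all regions.
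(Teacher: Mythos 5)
Your treatment of Equations \eqref{eq:gapX2} and \eqref{eq:gapX2+} is correct and essentially the paper's argument, with two harmless variations: you pass to the limit in \eqref{eq:Xmu1} rather than \eqref{eq:Xmu2}, and you obtain $\Phi_\infty(\p_n)\to 0$ directly from Proposition \ref{pro:opedges} via the sibling edges $\hat\p^{k_n+1}$, whereas the paper extracts it from the convergence of $X_\infty(\p_n)$ together with the positivity of all three terms in \eqref{eq:Xmu2}. Both routes work.

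The proof of \eqref{eq:gapX3}, however, is incomplete. Your steps (a) and (b) are sound and rest, as they should, on Proposition \ref{pro:edg}; but step (c) -- that the intervals $J_e=]X^L_\infty(\partial^Le),X^R_\infty(\partial^Re)[$ exhaust the complement of $\overline X_\infty$ -- is the actual content of the identity, and you leave it open: you yourself flag ``the delicate part'' without resolving it. In addition, the tracking-path construction contains an error. If $x$ is interior to the level-$n$ interval of $\p^{*n}$ for every $n$, the nested intervals shrink to a \emph{closed interval} of length $\Phi_\infty(\p^*)$ whose midpoint is $X_\infty(\p^*)$, not to the single point $x$; so the claim $X_\infty(\p^*)=x$ fails precisely when $\Phi_\infty(\p^*)>0$, which is the geometrically relevant case since rational paths are the ones carrying positive gaps. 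The irrational case can be salvaged (that limit interval is $[X^L_\infty(\p^*),X^R_\infty(\p^*)]\subset\overline X_\infty$, so $x\notin\overline X_\infty$ does force $\p^*$ rational), but the rational case then only yields $x\in\, ]X^L_\infty(\partial^Le),X^R_\infty(\partial^Re)]$, and the possible endpoint $x=X^R_\infty(\partial^Re)$, a limit of values $X_\infty(\q_k)$ with $\q_k$ decreasing to $\partial^Re$, is exactly the ``third scenario'' you worry about and do not treat. The paper sidesteps all of this by arguing directly on an arbitrary $u\notin\overline X_\infty$: enclose $u$ in an interval $]v,w[$ of the open complement, set $\mc P^L=\{\p\mid X^L_\infty(\p)\leq v\}$ and $\mc P^R=\{\p\mid X^R_\infty(\p)\geq w\}$, let $\p^L=\sup\mc P^L$ and $\p^R=\inf\mc P^R$, observe that any $\q\in\,]\p^L,\p^R[$ would have to satisfy both $X^L_\infty(\q)\geq w>u$ and $X^R_\infty(\q)\leq v<u$, which is absurd, so $]\p^L,\p^R[\,=\emptyset$; Proposition \ref{pro:edg} then identifies $\p^L=\partial^Le$ and $\p^R=\partial^Re$ for a single complementary region $e$, and $u\in J_e$. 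You should complete (c) along these lines rather than through the partition dichotomy.
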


	We first prove Theorem \ref{cor:ErrXinf} from Proposition \ref{pro:Xinfty}.
\begin{proof}	
Since $]\partial^-e,\partial^+e[=\emptyset$, we have that from the previous proposition and equation \eqref{eq:Xmu2} ,
\begin{eqnarray*}
	X_\infty(\partial^Re)-X_\infty(\partial^Le)=\frac{1}{2}\left(\Phi_\infty(\partial^Le)+\Phi_\infty(\partial^Re)\right)=\G_\Phi(e).
	\end{eqnarray*}
	Moreover from Equations \eqref{eq:gapX2} and \eqref{eq:gapX2+}, for $i=R$ or $i=L$
	$$
	\vert X^i(\partial^ie)-X_\infty(\partial^i e)\vert=\frac{1}{2}\Phi_\infty(\partial^i e)\ .
	$$
	Thus combining the two preceding equations one gets that 
	$$
	\lambda(]	X_\infty^L(\partial^Le),X_\infty^R(\partial^Re)[)=	X_\infty^R(\partial^Re)-X_\infty^L(\partial^Le)=2\G_\Phi(e).
	$$
	
	Thus from equation \eqref{eq:gapX3}, we get 
	\begin{eqnarray*}
		\partial\Phi&=&\lambda(\overline X_\infty)+ \sum_{e\in\mc F}\lambda(]X_\infty^L(\partial^Le),X_\infty^R(\partial^Re)[)\\
		&=&	\lambda(\overline X_\infty)+ 2\sum_{e\in\mc F}\G_\Phi(e)\ .
	\end{eqnarray*}
	Hence, 	$\lambda(\bar X_\infty(\mc P))=\partial\Phi- 2\G(\Phi)=2\E(\Phi)$. \end{proof}

	Let us now prove  Proposition \ref{pro:Xinfty}
	\begin{proof}  Given a path $\p$ and an infinite strictly  incresasing sequence  $\{\p_n\}_{n\in\mathbb N}$  converging to $\p$, then  $\{X_\infty (\p_n)\}_{n\in\mathbb N}$ converges to $X^-_\infty(\p)$. In particular, $\{X_\infty(\p_n)-X_\infty(\p_{n+1})\}_{n\in\mathbb N}$ converges to zero. Since  by Equation \eqref{eq:Xmu2},
	\begin{eqnarray*}
   X_\infty(\p_{n+1})-X_\infty(\p_n)=\mu_\Phi(]\p_n,\p_{n+1}[)+\frac{1}{2}(\Phi(\p_n)+\Phi(\p_{n+1}))\ ,
\end{eqnarray*} 
	the sequence $\{\Phi_\infty(\p_n)\}_{n\in\mathbb N}$ converges to zero.
Then by Equation \eqref{eq:Xmu2} again,
\begin{eqnarray*}
   X_\infty(\p)-X_\infty(\p_n)=\mu_\Phi(]\p_n,\p[)+\frac{1}{2}(\Phi(\p_n)+\Phi(\p))\ . 
\end{eqnarray*}
Thus taking the limit, one gets Equation \eqref{eq:gapX2}. A symmetric argument yields Equation \eqref{eq:gapX2+}.

For the last statement, let $u\not\in \overline X_\infty$.  Since $\overline X_\infty$ is closed, let $u\in ]v,w[\subset \mathbb R/\partial\Phi\cdotp\mathbb Z\setminus \overline X_\infty$ and let
$$
\mc P^L=\{\p\mid X^L_\infty(\p)\leq v\},\ \ \mc P^R=\{\p\mid X^R_\infty(\p)\geq w\}.
$$
Let now $\p^R=\inf(\p\in\mc P^R)$ and  $\p^L=\sup(\p\in\mc P^L)$. By definition of $u$ it follows that $X^L_\infty(\p^L)\leq v$ and symmetrically that $X^R_\infty(\p^R)\leq w$.  Let $\q\in]\p^L,\p^R[$. Then $X^L_\infty(\q)\geq w >u$ and   $X^R_\infty(\q)\leq v <u$. This is impossible, hence  $]\p^L,\p^R[=\emptyset$. It follows that $\p^i =\partial^i e$ for some complementary region $e$ and $u\in ]X^L_\infty (\partial^Le),X^R_\infty (\partial^Re)[$. Conversely, if $u\in ]X^L_\infty (\partial^Le),X^R_\infty (\partial^Re)[$ then $u\not\in \overline X_\infty$.
\end{proof}

\section{A rooted tree coding geodesics}\label{sec:tree}

This section is purely topological.  Let $S$ be a compact surface with one marked point $x_0$ and $\Sigma=S\setminus\{x_0\}$. Let $N=6(1-\chi(S))$.  Let $\mathcal T$ be the rooted trivalent tree, which has $N$ edges at the root. As a corollary of our main result we will obtain the following result.

\begin{theorem}
Let $T$ be a triangulation of $S$ whose only vertex is $x_0$. There there exists a labelling of the edges of $\mc T$ by simple curves passing though $x_0$. 
\end{theorem}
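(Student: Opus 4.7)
The plan is to realise this theorem as a direct corollary of the coding Theorem \ref{theo:topcod} by reading Mosher's coding backwards. The essential point is that every edge of $\mc T$ is intrinsically labelled by a pair consisting of a root edge and a finite word in two generators, and Mosher's coding provides exactly one simple arc through $x_0$ for each such pair.

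First I would check the numerology. A triangulation of $S$ with $x_0$ as its unique vertex has $V = 1$ and $3F = 2E$, so the Euler relation $V-E+F = \chi(S)$ forces $E = 3(1-\chi(S))$. Hence $T$ has exactly $2E = 6(1-\chi(S)) = N$ oriented edges, which matches the number of edges of $\mc T$ emanating from the root. I would then fix a bijection between the oriented edges of $T$ and the root edges of $\mc T$, chosen so that the cyclic order at $x_0$ in the oriented surface $S$ matches the planar cyclic order at the root $v_0$ of $\mc T$.

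Next I would observe that each edge $e$ of $\mc T$ is encoded uniquely by a pair $(\tau, w)$, where $\tau$ is the root edge lying on the unique path in $\mc T$ from $v_0$ to $e$, and $w$ is the finite (possibly empty) word in $\{\L, \R\}$ spelling the remainder of that path. Feeding $(\tau, w)$ into the coding of Theorem \ref{theo:topcod} produces a simple arc in $S$ from $x_0$ to $x_0$, and this is the label I would assign to $e$.

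The real work happens not in this corollary but in Theorem \ref{theo:topcod} itself (the rebranding of Mosher's lemma), where one must verify that the iterated $\L$/$\R$ construction really produces a simple arc. The only subtlety specific to the corollary is to set up the planar structure on $\mc T$, together with the $\L$/$\R$ convention at interior vertices, so that it faithfully tracks the cyclic order around $x_0$ in $S$; with this convention fixed once and for all, the labelling follows immediately. The main obstacle, as usual in Mosher-type statements, is thus pushed upstream into proving simplicity of the arcs produced by the coding, and is handled there.
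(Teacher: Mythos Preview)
Your approach is essentially the same as the paper's: both treat this theorem as a corollary of the construction underlying Theorem~\ref{theo:topcod}, labelling root edges by $\F(T,e)$ and propagating by $\R$ and $\L$, then projecting via $\pi$ to get the simple arc; the paper spells this out explicitly in the ``arboreal interpretation'' paragraph. One small correction: you write that the real work in Theorem~\ref{theo:topcod} is ``to verify that the iterated $\L/\R$ construction really produces a simple arc'', but simplicity is automatic, since each step outputs an edge of some ideal triangulation and such edges are embedded by definition --- the genuine content of Theorem~\ref{theo:topcod} is the bijectivity of the coding, which is not needed for the bare existence of a labelling.
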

Our actual results, Theorem \ref{theo:topcod} and \ref{theo:exten} will be more precise and will allow us to describe simple infinite geodesics starting at $x_0$ as embedded paths in $\mc T$.

The main proposition  is actually an elaboration on a Lemma by L. Mosher \cite{Mosher:1995dw} as Saul Schleimer has explained to us and  the proof  is actually identical. Our point of view and purpose are nevertheless different.

\subsection{A dynamical system on the set of triangulations}

Let $S$ be a closed oriented surface with one puncture $x_0$ and $\Sigma=S\setminus\{x_0\}$. An {\em ideal triangulation} of $\Sigma$ 
is a triangulation of $S$, up to isotopy, whose only vertex is $x_0$.  Observe that the number $N$ of oriented edges is $6(1-\chi(S))$: the singular flat metric obtained by identifying the triangles with equilateral triangles of length 1, has total curvature $N\pi/3-2\pi$. Since this total curvature is $-2\pi\chi(S)$, we obtain that $N=6(1-\chi(S))$.

We will usually think of this triangulation as a triangulation by ideal triangles of the surface $\Sigma$ equipped with a complete hyperbolic structure. Given a triangulation $T$ and an oriented edge $e$, let $\bar e$ be the same edge with the opposite orientation, and $s(e)$  the oriented edge whose origin is the end point of $e$ such that $e$ and $s(e)$ are both on the boundary of the triangle on the right of $e$.

Let $\mc S$ be the space of pairs $(T,e)$ where $T$ is an ideal triangulation of $S$ and $e$ an oriented edge of $T$. We are going to describe three transformations $\F$, $\R$ and $\L$ on $\mc S$.

First $\F(T,e)=(\widehat T,\widehat e)$ where $\widehat T$ is the triangulation flipped at $e$, and $\widehat e$ the new corresponding edge so that $(e,\widehat e)$ is positively oriented.

Given $(T,e)$, define $\R(T,e):=\F(T,s(e))$. 
\begin{figure}[h]
  \centering
  \subfloat[$(T,e)$]{\label{fig:Te}\includegraphics[width=0.3\textwidth]{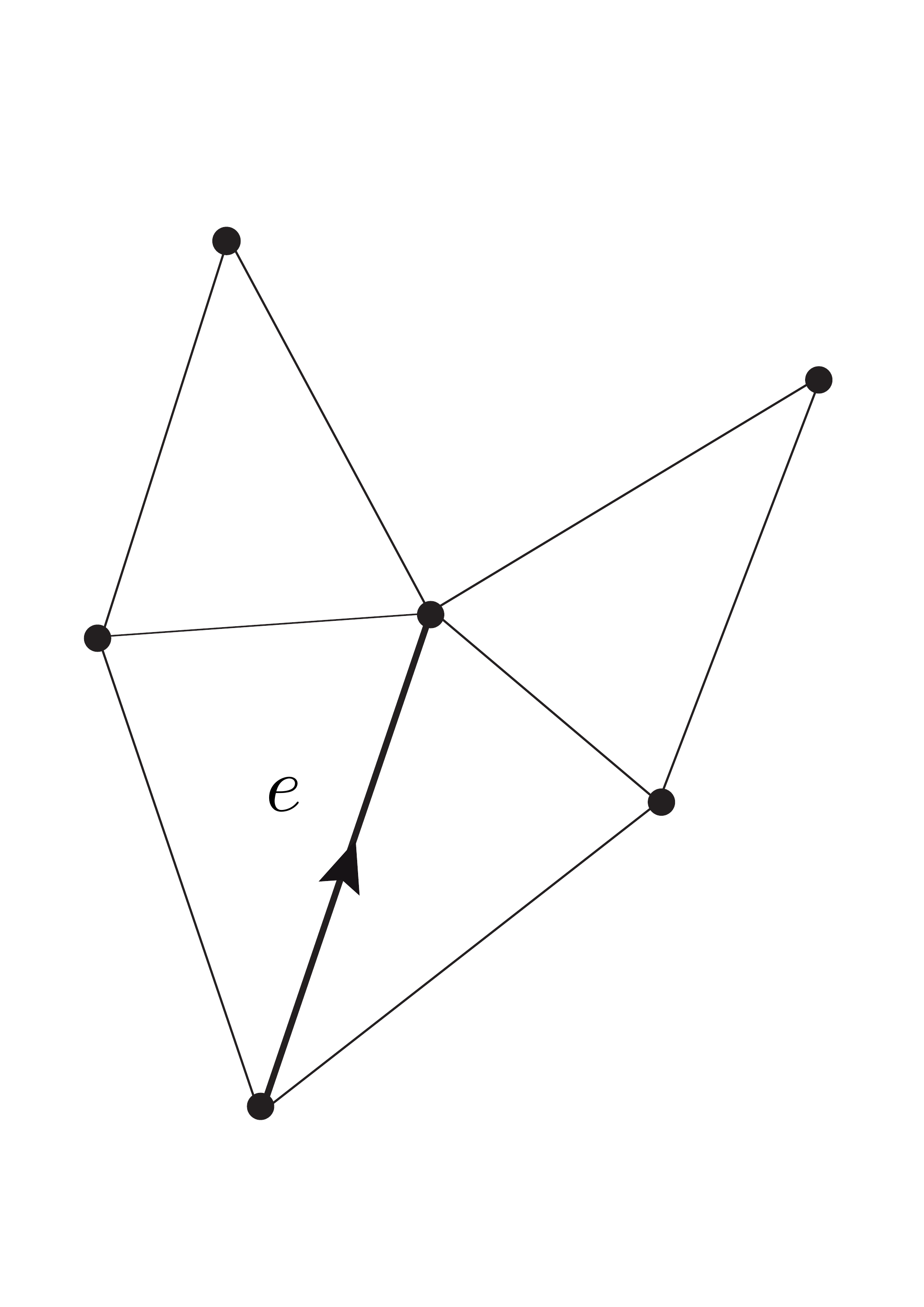} }               
  \subfloat[$\R(T,e)$]{\label{fig:Rte}\includegraphics[width=0.3\textwidth]{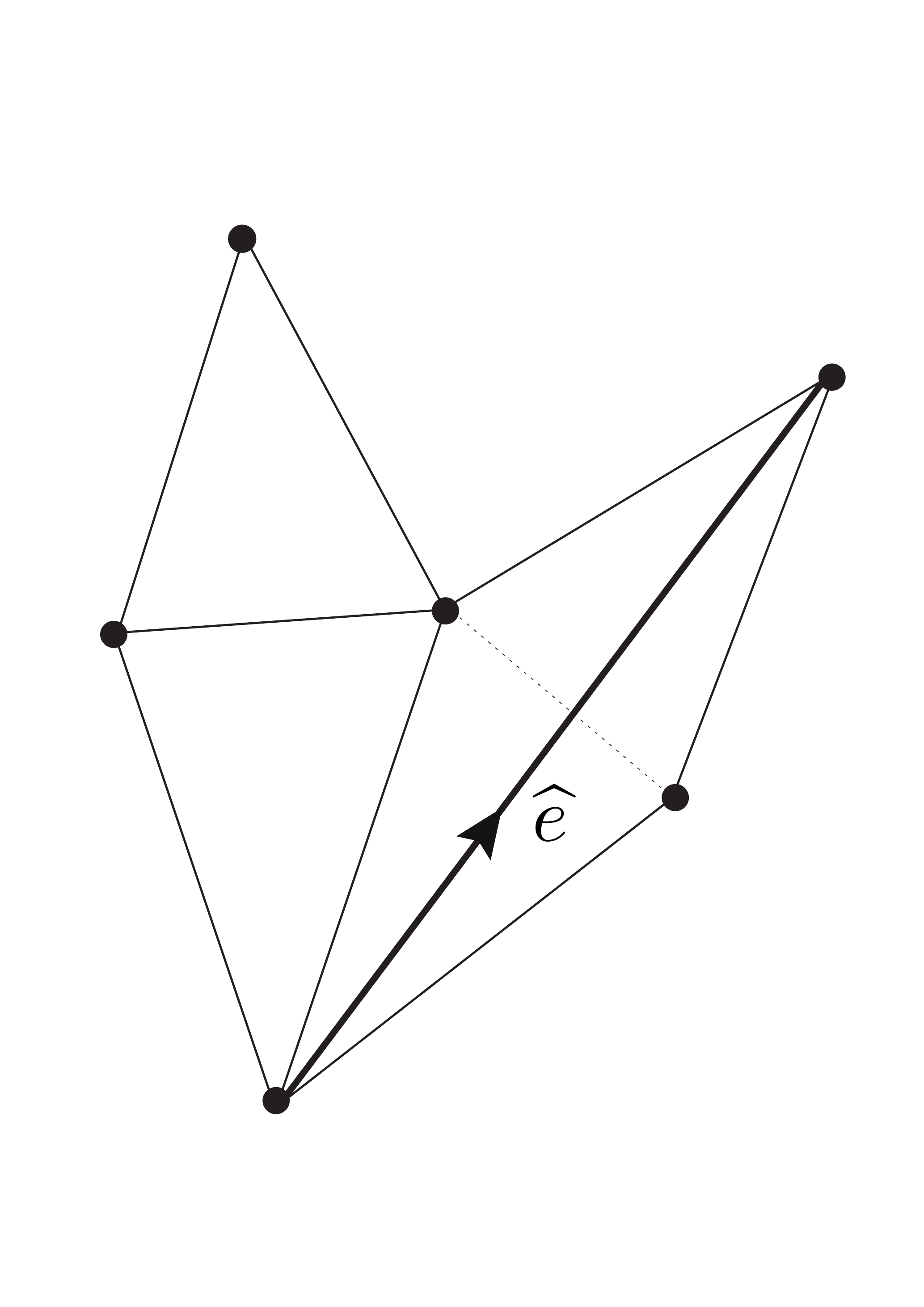}}
    \subfloat[$\L(T,e)$]{\label{fig:Lte}\includegraphics[width=0.3\textwidth]{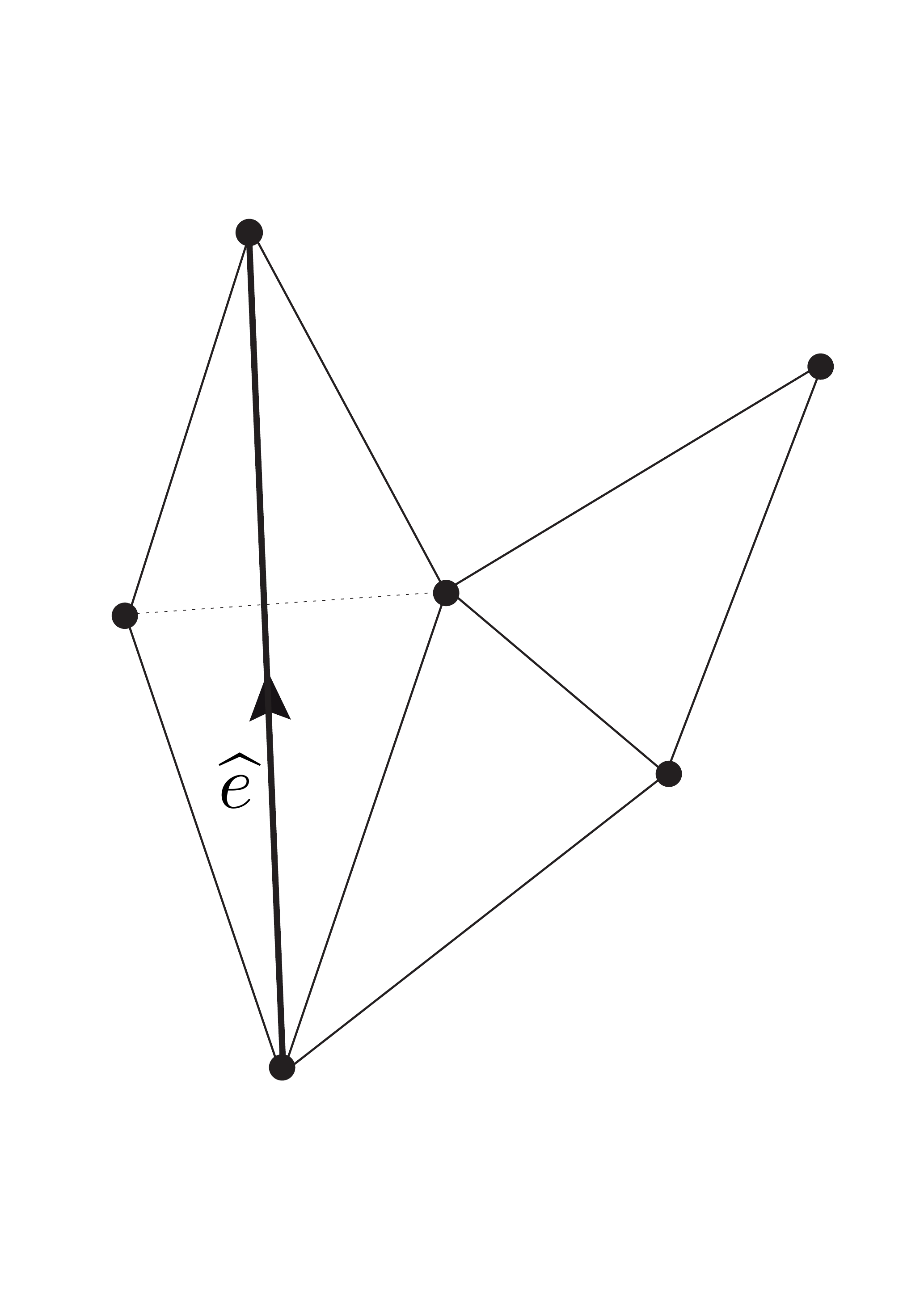}}
  \caption{Transformations $\R$ and $\L$}
  \label{fig:RL}
\end{figure}

A similar definition holds for $\L$ after changing the orientation of $\Sigma$. All together, these two transformations define an action of the free semigroup in two generators $\Fd$ on $\mc S$. 

Let also $L(x_0)$ be the space of simple loops, up to homotopy, passing through $x_0$ and $\pi$ the natural projection from $\mc S$ defined by $\pi(T,e)=e$. 

Our main result is the following

\begin{theorem}{\sc [Topological coding]}\label{theo:topcod}
Let $T_0$ be an ideal triangulation and $f$ an oriented topological embedded loop on $S$ through $x_0$. Then, either $f$ is an edge of $T_0$ or there exists a unique oriented edge $e$ of $T_0$ and a unique element $g$ in $\Fd$ such that
$\pi(g\cdotp\F(T_0,e))=f$. 
\end{theorem}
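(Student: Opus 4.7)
The plan is to fix a complete finite-area hyperbolic structure on $\Sigma$ so that $T_0$ becomes a geodesic ideal triangulation and $f$ becomes the unique taut (or geodesic) representative of its based homotopy class through $x_0$, and then to run an induction on $i(f, T_0)$, the geometric intersection number of $f$ with the $1$-skeleton of $T_0$. The case $i(f, T_0) = 0$ forces $f$ to lie inside a single ideal triangle; since it is simple and based at the unique vertex, it must be one of that triangle's sides, putting us in the first alternative of the theorem.

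For existence when $i(f, T_0) \geq 1$, I would let $e$ be the first oriented edge of $T_0$ crossed by $f$ after leaving $x_0$, with orientation fixed by requiring the initial triangle traversed by $f$ to sit on its right. Perform $\F(T_0, e)$ to obtain the pair $(T_1, \hat e)$: inside the quadrilateral $Q$ formed by the two triangles of $T_0$ on either side of $e$, the portion of $f$ is a single embedded arc by simpleness. Simpleness then leaves only two possibilities: either $f$ coincides with $\hat e$ (and we are done with $g$ trivial), or $f$ still crosses $\hat e$ and leaves $Q$ through one of the two edges adjacent to the terminal vertex of $\hat e$, which is precisely either $s(\hat e)$ or its left-hand analogue. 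In the latter case we record $\R$ or $\L$ accordingly, and repeat from $(T_1, \hat e)$; the crucial estimate $i(f, T_1) < i(f, T_0)$ then terminates the induction and assembles $g \in \Fd$.

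For uniqueness, every step of this construction was forced by $f$: the initial edge $e$ is literally the first $T_0$-edge crossed by $f$ in the prescribed orientation, and each subsequent letter of $g$ is determined by which of the two available edges of the current quadrilateral is next used by $f$. Hence any other pair $(e', g')$ realising the same $f$ must agree with $(e, g)$ letter by letter, and if $f$ is an edge of $T_0$ then no such $(e, g)$ exists since after any initial flip the distinguished edge lies in the interior of a quadrilateral and cannot equal an edge of $T_0$.

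The main obstacle is the inductive step, which is essentially the content of Mosher's lemma: one must verify that whenever $f \neq \hat e$, (i) the geometric intersection number strictly decreases after the flip and (ii) $f$ necessarily exits $Q$ through an edge adjacent to the endpoint of $\hat e$, so that the next required flip is of the correct form $s(\hat e)$ or its reflection. This is a finite case analysis of how the arc of $f$ can sit inside $Q$, ruling out the non-simple configurations by the embeddedness of $f$. One also has to be careful with the right/left orientation conventions used in the definition $\R(T, e) \defeq \F(T, s(e))$, so that the letter produced by the algorithm genuinely matches the action on $\mc S$.
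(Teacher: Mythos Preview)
Your outline is in the right spirit but departs from the paper's argument in both halves, and the existence half carries a real gap. The assertion that ``inside the quadrilateral $Q$ \ldots\ the portion of $f$ is a single embedded arc by simpleness'' is false: simplicity of $f$ on the \emph{surface} does not prevent $f$ from passing through $Q$ many times in parallel strands (on the once--punctured torus the two triangles adjacent to an edge already fill $\Sigma$, so $f\cap Q$ has as many strands as $i(f,T_0)$). Once multiple strands are present, the estimate $i(f,T_1)<i(f,T_0)$ is no longer the one--arc picture you sketch: strands that corner around the far vertex of $Q$ miss $e$ but hit the new diagonal $\hat e$, and one must argue that these are always outweighed. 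Your proposed ``finite case analysis'' therefore cannot take place in $Q$ as stated; it would have to happen in an embedded lift, and that is precisely the difficulty the paper organises differently.

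For existence the paper does not induct on $i(f,T)$. It fixes a fundamental domain $\Delta$ (a triangulated disk mapping onto $S$) and tracks the pair $(n,N)$, where $N$ is the number of components of $\Delta^{-1}(f)$ and $n$ is the number of triangles of the disk met by the first component $\eta_1$. When $n\geq 2$, Proposition~\ref{reduccomp2} shows that flipping the first edge met by $\eta_1$ keeps $N$ fixed and drops $n$ by one; this is easy exactly because inside the disk the two adjacent triangles form an honestly embedded quadrilateral which $\eta_1$ meets in a single arc --- the situation you wanted but could not guarantee in $Q\subset\Sigma$. When $n=1$, Proposition~\ref{reduccomp1} re-cuts $\Delta$ so as to lower $N$ and push $n$ back to at least $2$. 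The complexity thus decreases in a well-founded way until $f$ is an edge. For uniqueness the paper does not argue that each step is forced; instead Proposition~\ref{orderP} shows that the map $(e,g)\mapsto\pi(g\cdotp\F(T_0,e))$ is strictly order-preserving for the natural cyclic orders on $E\times\Fd$ and on $L(x_0)$, and injectivity is immediate.
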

 
In particular, the action of $\Fd$ on $\mc S$ is free. We will explain later a more precise version of this result.

We will also explain that this construction give rise to a coding of all embedded geodesics without self intersection issuing from $x_0$ in an auxiliary complete hyperbolic metric on $\Sigma$. A further property of this coding yields the Birman--Series Theorem.

We will also explain using this coding that every Hölder cross ratio on $\partial_\infty$ gives by a harmonic measure construction a transverse measure with zero entropy on the set of simple infinite geodesics (emanating from a boundary component).

Then, generalising an idea from Bowditch in the punctured torus case \cite{Bowditch:1996}, this construction will yield a new proof --or rather a new interpretation--  of McShane identity for cross ratios \cite{Labourie:2009wi}.

\subsection{An arboreal interpretation}

For later use and in order to make the connection with the previous constructions, we present the results in terms of a planar tree.

Let $T$ be a triangulation with $n$ edges whose only vertex is $x_0$. then let $\mathcal T$ be the rooted trivalent (except at the root) planar tree with $2n$ root at the origin. 

We first define a labelling of the edges and the complementary regions as follows:
\begin{itemize}
	\item We label the initial edges by the  pairs $\F(T,e)$ using the cyclic order that comes from the cyclic order on the oriented  simple curves $\pi(\F(T,e))$.
	\item Next, label every other edge recursively so that if an edge $a$ of the tree is labelled by $(T',e')$, the edge on the right of $a$ is labelled by $\R((T',e'))$ and the edge on the left is  labelled $\L((T',e'))$.
	\item We  label the  interior-most complementary regions $f$ (those adjacent to the root $v_0$) by $(T,e)$, where $\R(T,e)$ is the label of the first edge of the tree on the right of $f$. It then follows that $\L(T,e)$ is the label of  the first edge of the tree on the left of $f$.
	\item Finally, we label finally the other complementary regions by the label of the edge of the tree defining it.
\end{itemize}
Our theorem now can be rewritten as

\begin{theorem}{\sc [Labelling complementary regions]}\label{theo:topcod2} Let $T$ be a triangulation of $S$ with $n$ edges whose only vertex is $x_0$. Let $\Psi$ be the labelling of the complementary regions of 
 $\mc T_n$  by  pairs $(T',e')$ where $e'$ is an oriented edge of the triangulation $T'$ -- described above. Then the map 
	$f\mapsto \pi(\Psi(f)$ is a bijection from the space of complementary region to the space of simple curves passing through $x_0$.
\end{theorem}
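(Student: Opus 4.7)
The plan is to derive the bijection directly from Theorem \ref{theo:topcod} by matching the two kinds of complementary regions of $\mc T$ with the two cases of that theorem. Complementary regions come in two flavours: the innermost regions lying in $S(0)$, each adjacent to the root, and the outer regions, each bounded by a single non-root edge. Simple oriented loops through $x_0$ come likewise in two flavours by Theorem \ref{theo:topcod}: oriented edges of $T$, and loops of the form $\pi(g\cdotp\F(T,e))$ with $g\in\Fd$ and $e$ an oriented edge of $T$. My first step would be to chase the labelling rules of $\Psi$ and check that $\pi\circ\Psi$ sends innermost regions to oriented edges of $T$ and outer regions to loops of the second type; this is an immediate consequence of the recursive definition of $\Psi$.

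For surjectivity, I would treat the two cases of Theorem \ref{theo:topcod} separately. If $\gamma$ is an oriented edge of $T$, I would exhibit the innermost region $f$ whose label $\Psi(f)=(T,e)$ satisfies $e=\gamma$: by construction $\R(T,e)=\F(T,s(e))$ must agree with the label of the initial edge on the right of $f$, and since $s$ is a bijection on oriented edges of $T$, as $f$ ranges over the $2n$ innermost regions, $e$ ranges over each of the $2n$ oriented edges of $T$ exactly once. If instead $\gamma=\pi(g\cdotp\F(T,e))$, I would follow the word $g$ down the tree starting from the initial edge labelled $\F(T,e)$, arriving at a unique edge of $\mc T$ whose label, by the recursive rule, is $g\cdotp\F(T,e)$; the outer region bounded by that edge then maps to $\gamma$.

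For injectivity, suppose $\pi(\Psi(f_1))=\pi(\Psi(f_2))=\gamma$. The dichotomy from the first step forces $f_1$ and $f_2$ to be of the same type, since the second case of Theorem \ref{theo:topcod} excludes edges of $T$. If both are innermost, the cyclic labelling identified in the surjectivity step is already a bijection, so $f_1=f_2$. If both are outer, the uniqueness of $(g,e)$ in Theorem \ref{theo:topcod} fixes the label $g\cdotp\F(T,e)$, and the freeness of the $\Fd$-action on $\mc S$ (noted in the text as a corollary of Theorem \ref{theo:topcod}) ensures that distinct edges of $\mc T$ carry distinct labels, forcing $f_1=f_2$.

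The substantive content thus rests entirely in Theorem \ref{theo:topcod}; the present statement is really its arboreal repackaging. The only real bookkeeping I anticipate is verifying that the cyclic order on initial root edges, coming from the cyclic order at $x_0$ of the simple curves $\pi(\F(T,e))$, is compatible with $s$ in the precise way that makes the innermost labelling hit each oriented edge of $T$ exactly once. This is a local combinatorial check around $x_0$ involving $s$ and its left-handed counterpart, and presents no conceptual obstacle.
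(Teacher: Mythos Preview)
Your proposal is correct and matches the paper's approach exactly: the paper presents Theorem~\ref{theo:topcod2} simply as a rewriting of Theorem~\ref{theo:topcod} (``Our theorem now can be rewritten as\ldots'') without a separate proof, and you have supplied precisely the bookkeeping that makes this repackaging honest. The only minor remark is that your appeal to freeness of the $\Fd$-action for injectivity on outer regions is slightly roundabout---once the uniqueness of $(e,g)$ in Theorem~\ref{theo:topcod} is known, the tree edge is determined directly by construction---but this does not affect correctness.
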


\subsection{Order preserving} 
In this section, we equip $\Sigma$ with an auxiliary complete hyperbolic metric so that the triangles are realized as ideal hyperbolic triangles

Let $L(x_0)$ be the set of simple oriented geodesics based at $x_0$. Observe that the orientation of $\Sigma$, as well as that of an auxiliary hyperbolic metric, gives a cyclic ordering on $L(x_0)$ eventually independent of the metric: we order the geodesics $\gamma$  by the order on a small horosphere $H$ at $x_0$ of the first intersection point in $H\cap\gamma$.

We now  define a map $\Psi$  from $E\times\Fd$ to  $L(x_0)$ by
\begin{align}
 \Psi(e,g)\defeq \pi\left(g\cdotp\mk F(T_0,e)\right).
 \end{align}
 We consider the cyclic ordering on $E$ induced by $e\mapsto\Psi(e,1)$ from the order on $L(x_0)$.
%
%As a convention, we write
%\begin{align}
%\Psi(e,-1)=e,
%\end{align}
%
%so that we extend $\Psi$ to a map from $E\times(\Fd\cup\{-1\})$.

We fix a triangulation $T$.  We  define a cyclic order (by lexicographic ordering) on $E\times\Fd$. We now prove
\begin{proposition}\label{orderP}
The map $\Psi$ from $E\times \Fd$ to $L(x_0)$ 
is order preserving.
\end{proposition}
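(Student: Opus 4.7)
The plan is to proceed by induction on word length in $\Fd$, using the planar tree structure to reduce the proof to a local claim about a single application of $\L$ or $\R$.

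When both words are trivial, the cyclic order on $E$ is by construction the pullback of the cyclic order on $L(x_0)$ via $e\mapsto \Psi(e,1)$, so the statement is a tautology. In the general case, by the standard lexicographic induction, it suffices to establish the following nesting property: for every $(T,a)\in \mc S$ in the orbit of $(T_0,e)$ under $\Fd$, all loops $\pi(g\cdot(T,a))$ with $g\in \Fd$ lie in a definite open arc $I(T,a)$ on the horocycle at $x_0$, and this arc decomposes in cyclic order as
\begin{equation*}
I(T,a)\;=\;I(\L(T,a))\,\sqcup\,\{\pi(T,a)\}\,\sqcup\,I(\R(T,a)).
\end{equation*}
Once this nesting is known, comparing $(e,hw)$ and $(e,hw')$ whose words first differ after a common prefix $h$ amounts to placing them in the two sub-arcs of $I(h\cdot\F(T_0,e))$, and the order preserving property follows on the nose.

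The heart of the argument is therefore the nesting claim. The natural approach is to lift everything to the universal cover of $\Sigma$ with an equivariant lift of the ideal triangulation. Fix a lift $\tilde x_0$ of $x_0$; the cyclic order on the horocycle at $x_0$ then translates into the circular order at $\tilde x_0$ of the ingoing endpoints of lifts of simple loops. Each $(T,a)\in \mc S$ selects a lift of the corresponding loop and hence a well-defined ``far'' ideal endpoint. A flip $\F$ replaces $a$ by the other diagonal of the quadrilateral formed by the two triangles adjacent to $a$; a direct inspection of this local picture shows that the new far endpoint lies strictly between the far endpoints obtained by continuing along the two outgoing edges of $T$ at the tail of $a$. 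Iterating, the successive applications of $\R$ and $\L$ confine the far endpoints to progressively nested sub-arcs, and a short induction on word length yields the splitting formula.

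The main obstacle is bookkeeping orientations consistently, so that $\L$- and $\R$-descendants populate the correct sub-arcs on each side of $\pi(T,a)$. This is essentially the geometric content of Mosher's lemma \cite{Mosher:1995dw} underlying Theorem \ref{theo:topcod}, and is where care is needed: one must carefully track the orientation of $\Sigma$ together with the conventions built into $s(\cdot)$ and $\F$ ensuring that $(a,\F a)$ is positively oriented. Once these conventions are pinned down, the nesting claim becomes a routine planar combinatorial exercise in the link of $\tilde x_0$.
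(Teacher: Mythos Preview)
Your proposal is correct and follows essentially the same route as the paper: both argue by induction on word length, using the local picture of a flip to see that each new edge $\pi(\L(T,a))$, $\pi(\R(T,a))$ lands in the correct sub-arc of the horocycle adjacent to $\pi(T,a)$. The paper phrases the induction globally (the $2^{d}N$ edges at depth $d$ alternate correctly on the horocycle, each consecutive pair bounding a triangle of the current triangulation), whereas you phrase it as a nesting of arcs $I(T,a)$ within a single subtree and pass through the universal cover; these are equivalent bookkeeping devices for the same geometric observation.
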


As an immediate corollary, we get that the action of $\Fd$ on $\mc S$ is free.

\begin{proof}
We will prove the proposition by induction on the length of the word in $\Fd$. Since $E$ has $N$-elements, the simple loops $E$ and the simple loops of the form $\Psi(e,1)$  divides the horocycle into $2N$ intervals so that elements of $E$ and elements of 
$\Psi(E,1)$ alternate. Furthermore, by the definition of the action of $\Fd$, we see that when $g$ has word length one, $\pi(g\cdotp\mk F(T_0,e))$ lies in  the interval to the left or right of the corresponding element $e \in E_{x_0}$ depending whether $g=\R$ or $g=\L$. The new $2N$ edges that are introduced alternate with the original $2N$ edges with respect to the cyclic ordering and again, by construction every pair of alternating edges (one of these being $\pi(g\cdotp\mk F(T_0,e))$) bound a triangle of the triangulation associated to $g\cdotp\mk F(T_0,e)$. Continuing inductively, we see that at any distance $d>1$ from the root, we have $2^{d}N$ edges whose cyclic ordering agrees with that of $E\times \Fd$ which completes the proof.\end{proof}

\subsection{Reducing the complexity}

We are going to define in this paragraph a {\em complexity invariant} associated to a triple $(T,\Delta,\eta)$ where
\begin{itemize}
\item $T$ is a triangulation by geodesics arcs for a complete (auxiliary) hyperbolic metric of $\Sigma$,
\item $\eta$ is an oriented  simple geodesic starting at the puncture $x_0$,
\item $\Delta$ is a {\em fundamental domain} for the surface with respect with the triangulation, that is a map  from a triangulated disk $D$ to $S$, which sends triangles to triangles, so that $\Delta$ restricted to the interior of $D$ is injective with a dense image.
\end{itemize}

Our {\em complexity invariant} is a pair of integers
\begin{align}
C(T,\Delta,\eta)= (n(T,\Delta,\eta),N(T,\Delta,\eta)),
\end{align}
defined in the following way. First if $\eta$ is an edge of $T$, then 
\begin{align}
C(T,\Delta,\eta)= (1,1).
\end{align}
If $\eta$ is not an edge, let $(\eta_1,\eta_2,\ldots, \eta_N)$ be the (ordered) connected components of $\Delta^{-1}(\eta)$. Then
\begin{enumerate}
\item $
N(T,\Delta,\eta)$ is the number of connected components of $\Delta^{-1}(\eta)$, that is $
N(T,\Delta,\eta)=N$,
\item $n(T,\Delta,\eta)$ is the number of triangles of $D$ that $\eta_1$ intersects.
\end{enumerate}
Our two main results are the following
\begin{proposition}\label{reduccomp1}
Assume that $n(T,\Delta,\eta)=1$ and $
N(T,\Delta,\eta)>2$, then there exists $\mathring \Delta$ so that
\begin{align}
 N(T,\mathring \Delta,\eta)&\leq N(T,\Delta,\eta)-1,\\
 2&\leq n(T,\mathring \Delta,\eta).
\end{align}
\end{proposition}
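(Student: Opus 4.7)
The plan is to modify the fundamental domain so as to absorb the next triangle along $\eta$'s trajectory into $D$, and then to track how the number of components $N$ changes under this surgery.

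I would first pin down $\eta_1$'s exit. Since $n(T,\Delta,\eta)=1$, the initial component $\eta_1$ lies in a single triangle $\tau_1\subset D$, starts at a preimage of $x_0$ (a vertex of $\tau_1$), and exits $\tau_1$ through some edge $\alpha$. The edge $\alpha$ must lie on $\partial D$: otherwise $\eta_1$ would continue across $\alpha$ into a second triangle of $D$, violating $n=1$. The edge $\alpha$ is identified in $D$ with another boundary edge $\alpha'$, both copies of the same edge $a$ of $T$; write $\tau_2$ for the triangle of $D$ on the interior side of $\alpha'$. In the universal cover, the triangle $\tilde\tau_2$ adjacent to the initial lift $\tilde\tau_1$ across the lift of $a$ is a $\pi_1(\Sigma)$-translate of $\tau_2$, and $\tau_2$ is precisely the copy in $D$ of the triangle that $\eta$ enters after crossing $a$ in $S$.

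Next, I would construct $\mathring\Delta$ by a spanning-tree flip. View $D$ as determined by a spanning tree $\mathcal T$ of the dual graph of $T$: interior edges of $D$ are tree edges, boundary edges of $D$ are pairs of copies of co-tree edges. Adding the co-tree edge $a$ to $\mathcal T$ creates a unique cycle $\mathcal C$ consisting of $a$ together with a tree-path $P=\beta_1,\dots,\beta_k$ from $\tau_1$ to $\tau_2$. For a carefully chosen $b\in P$, let $\mathring{\mathcal T}=\mathcal T+a-b$ and take $\mathring\Delta$ to be the corresponding fundamental domain. In $\mathring D$ the edge $a$ is now interior, so $\tau_1$ and $\tau_2$ are adjacent across $a$; the new initial component $\mathring\eta_1$ crosses $a$ into $\tau_2$, giving $n(T,\mathring\Delta,\eta)\geq 2$ immediately.

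Finally, I would track $N$. The flip changes the number of cuts of $\tilde\eta$ by the tiling of $\tilde\Sigma$ according to the new fundamental domain. Promoting $a$ from co-tree to tree removes $m_a$ cuts (one per crossing of $a$ in $S$), and at least one of these removed cuts genuinely merges distinct components (namely $\eta_1$ and $\eta_2$), giving a decrease in $N$ of at least $1$; demoting $b$ from tree to co-tree adds $m_b$ new cuts (increasing $N$ by the number of crossings of $b$). The decisive step is thus to choose $b\in P$ with $m_b=0$, or at least $m_b<m_a$. \textbf{Main obstacle.} The hard part is precisely showing that one can always find such a $b$. I would attack this by interpreting $P\cup\{a\}$ as a closed loop in $S$ assembled from edges of $T$, and using that $\eta$ is simple (hence meets this loop in a controlled way) to deduce that some $\beta_i\in P$ avoids $\eta$. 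If this direct combinatorial argument falters in degenerate cases (for instance, when $\tau_2$ is not a leaf of $\mathcal T$ and the path $P$ is highly entangled with $\eta$), I would instead iterate: each single-edge flip along $\mathcal C$ either reduces $N$ or simplifies $P$, and finitely many such flips must yield a fundamental domain $\mathring\Delta$ with $n\geq 2$ and $N\leq N-1$.
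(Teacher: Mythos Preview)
Your spanning-tree framework is correct and is exactly the cut-and-paste the paper performs, but you have made the ``main obstacle'' harder than it is by searching over all of $P$. The paper simply takes $b=\beta_1$, the \emph{first} edge of $P$, which is one of the two other sides of the initial triangle $\tau_1$ (the paper calls it $e_+$, with $e_0=a$ the exit edge and $e_-$ the remaining side). With this choice the inequality $m_b<m_a$ is immediate from simplicity of $\eta$ \emph{inside the single triangle $\tau_1$}: since $\eta_1$ runs from the vertex opposite $e_0$ to $e_0$, any other component of $\Delta^{-1}(\eta)$ meeting $e_+$ must also meet $e_0$ (it cannot escape through the vertex or cross $\eta_1$), while $\eta_1$ meets $e_0$ but not $e_+$. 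Hence $i(e_0,\eta)-i(e_+,\eta)=1+i(e_-,\eta)\geq 1$, and $N$ drops by at least one. No global cycle argument and no iteration is needed.

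Two remarks on your write-up. First, your hoped-for conclusion that some $\beta_i$ satisfies $m_{\beta_i}=0$ is too strong and can fail; only $m_{\beta_1}<m_a$ is true in general, and that is all you need. Second, the WLOG in the paper (``$e_0$ appears on the boundary of $D^+$'') is precisely the statement that $\tau_2\in D^+$, i.e.\ that $\beta_1=e_+$ rather than $e_-$; the other case is symmetric. Once you make this specific choice, your argument for $n(T,\mathring\Delta,\eta)\geq 2$ goes through verbatim.
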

The second proposition is as follows.
\begin{proposition}\label{reduccomp2}
Assume that $n(T,\Delta,\eta)>1$ and let $e$ be the first edge of $T$ that $\eta$ meets. Let $(\mathring T,\mathring e)\defeq \F(T,e)$. Then $\Delta$ is still a fundamental domain for $\mathring T$ and furthermore,
\begin{align}
 N(\mathring T,\Delta,\eta)&=N(T,\Delta,\eta),\cr
  n(\mathring T,\Delta,\eta)&=n(T,\Delta,\eta)-1.
\end{align}
\end{proposition}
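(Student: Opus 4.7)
The plan is to split the proposition into three claims: that $\Delta$ remains a fundamental domain for $\mathring T$, that $N$ is unchanged, and that $n$ drops by exactly one.

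For the fundamental-domain claim, I would argue that the flip $\F(T,e)$ is a purely local modification inside the quadrilateral $Q = T_1 \cup T_2$ formed by the two triangles of $T$ sharing $e$: the edge $e$ is replaced by the opposite diagonal $\mathring e$ of $Q$. Each preimage of $Q$ in $D$ is itself a sub-quadrilateral (possibly with some of its sides lying along boundary edges of $D$ that come in identified pairs), and one re-triangulates each such preimage with the preimage of $\mathring e$. The underlying disk $D$, the map $\Delta$ as a set function, and the boundary-edge pairings of $D$ are all unchanged, so $\Delta$ is still a fundamental domain for $\mathring T$. As a consequence, $\Delta^{-1}(\eta) \subset D$ and its decomposition into connected components are the same in both settings, which immediately gives $N(\mathring T, \Delta, \eta) = N(T, \Delta, \eta)$ and identifies the first component $\eta_1$ on both sides.

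The substantive step is the equality $n(\mathring T, \Delta, \eta) = n(T, \Delta, \eta) - 1$. I would label the vertices of $Q$ cyclically as $p, a, q, b$ so that $e = ab$, $T_1 = pab$ is the triangle containing the starting vertex $p$ of $\eta_1$, and $T_2 = qab$; after the flip, $\mathring e = pq$ splits $Q$ into $T_1' = paq$ and $T_2' = pbq$, both of which still have $p$ as a vertex. In the original $T$, the arc $\eta_1$ enters $T_1$ at $p$, crosses $e$ at an interior point of $ab$ (since $\eta$ is geodesic and $x_0$ is the only triangulation vertex), and enters $T_2$; since $n(T,\Delta,\eta) > 1$ and a simple geodesic cannot recross $e$, the arc must exit $Q$ at an interior point of $qa$ or of $qb$. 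Now the entry point $p$ and the exit point both lie in the closure of a single triangle of $\mathring T$: in $T_1' = paq$ if the exit is on $qa$, or in $T_2' = pbq$ if on $qb$. Since ideal hyperbolic triangles are convex in the universal cover of $\Sigma$ (into which $D$ develops), the geodesic sub-arc $\eta_1 \cap Q$ lies entirely in that one triangle of $\mathring T$. Hence $\eta_1$ meets a single triangle of $\mathring T$ inside $Q$ in place of the two triangles $T_1, T_2$ of $T$, while outside $Q$ the two triangulations of $D$ coincide and $\eta_1$ intersects exactly the same triangles.

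The main obstacle I anticipate is the bookkeeping in the fundamental-domain claim: when a preimage of $Q$ in $D$ is split across a boundary identification of $D$, one has to check that the re-triangulation remains compatible with the edge pairings, i.e.\ that the two sub-quadrilaterals on either side of the identification receive matching new diagonals. The convexity argument driving the count itself is robust, with the transverse crossing of $e$ by $\eta_1$ coming for free since $x_0$ is the unique vertex of $T$ and $\eta$ is a geodesic emanating from $x_0$.
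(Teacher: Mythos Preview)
Your argument is correct and follows the same line as the paper's, and your convexity analysis for the drop in $n$ is in fact more careful than what the paper writes. The one place where you make life harder for yourself is the fundamental-domain claim: the ``main obstacle'' you flag does not actually occur, precisely because of the hypothesis $n(T,\Delta,\eta)>1$. That hypothesis says $\eta_1$ meets at least two triangles of $D$; the first two are your $\delta_1=T_1$ and $\delta_2=T_2$, and $\eta_1$ passes from one to the other through an interior point of a preimage of $e$, so that preimage is an \emph{interior} edge of $D$ and $\delta_1\cup\delta_2$ is an honest quadrilateral sitting inside $D$. Since $\Delta$ is a fundamental domain, $\delta_1$ and $\delta_2$ are the \emph{only} preimages of the two triangles bounding $e$, hence the only preimage of $Q$; there is no split copy along $\partial D$ to worry about. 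The flip is then a single diagonal swap inside $D$, leaving $\partial D$ and the boundary pairings untouched, which is exactly how the paper dispatches the claim in one line. With this observation your proof is complete and matches the paper's.
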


\subsection{Proof of proposition \ref{reduccomp1}}

By assumption there is a (closed) triangle $\delta$ in $D$ such that $\eta_1\subset \delta$. Let $e_0$ be the edge of $\delta$ opposite to the origin of $\eta_1$, $e_+$ and $e_-$ be the edges of $\delta$ on the left, respectively right of $\eta_1$. Let also $D^+$ be the connected component of $D\setminus\delta$ whose closure contains $e^+$ and let us define similarly $D^-$ (See figure \ref{fig:Delta}).
\begin{figure}[h]
\includegraphics[width=0.6\textwidth]{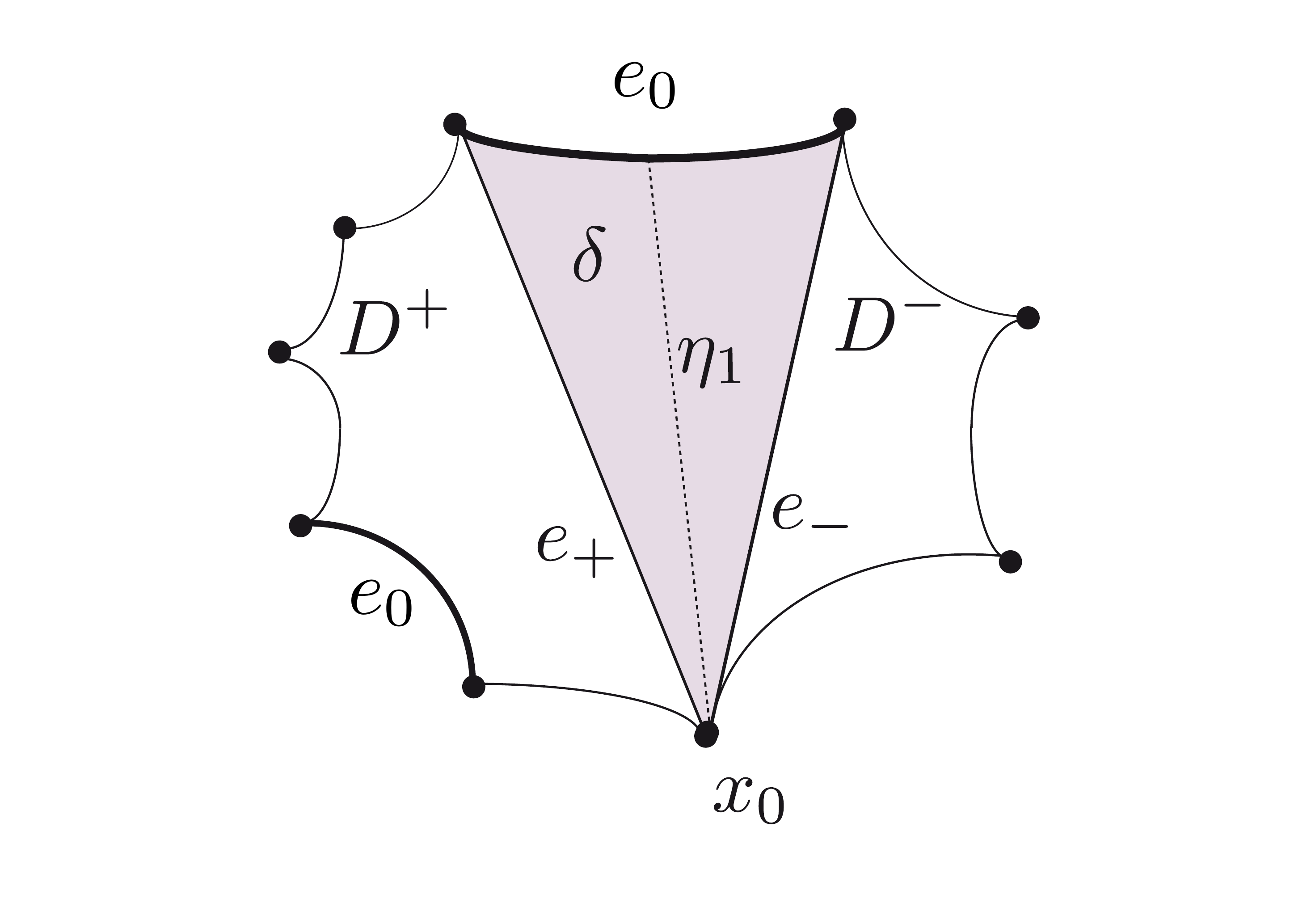}               
\caption{The subdomains of $\Delta$} \label{fig:Delta}
\end{figure}

Now, we may as well assume that $e_0$ appears on the boundary of $D^+$, the case where $e_0$ appears on the boundary of $D^-$ is handled symmetrically. We remark that $D^-$ could be empty.

Let then 
\begin{align}
  D^-_0\defeq D^-\cup \delta.
\end{align}
We now define a new triangulated disk and a new fundamental domain by
\begin{align}
  \mathring D\defeq D_0^-\cup_{e_0}D^+,
\end{align}
where the notation means that we glue $D_0^-$ and $D^+$ along $e_0$. The fundamental domain is then constructed accordingly. Observe that $D^+$ is non empty and thus
\begin{align}
  n(T,\mathring \Delta,\eta)\geq 2. 
\end{align}

\begin{figure}[h]
  \centering
  \subfloat[The old $\Delta$]{\label{fig:1a}\includegraphics[width=0.4\textwidth]{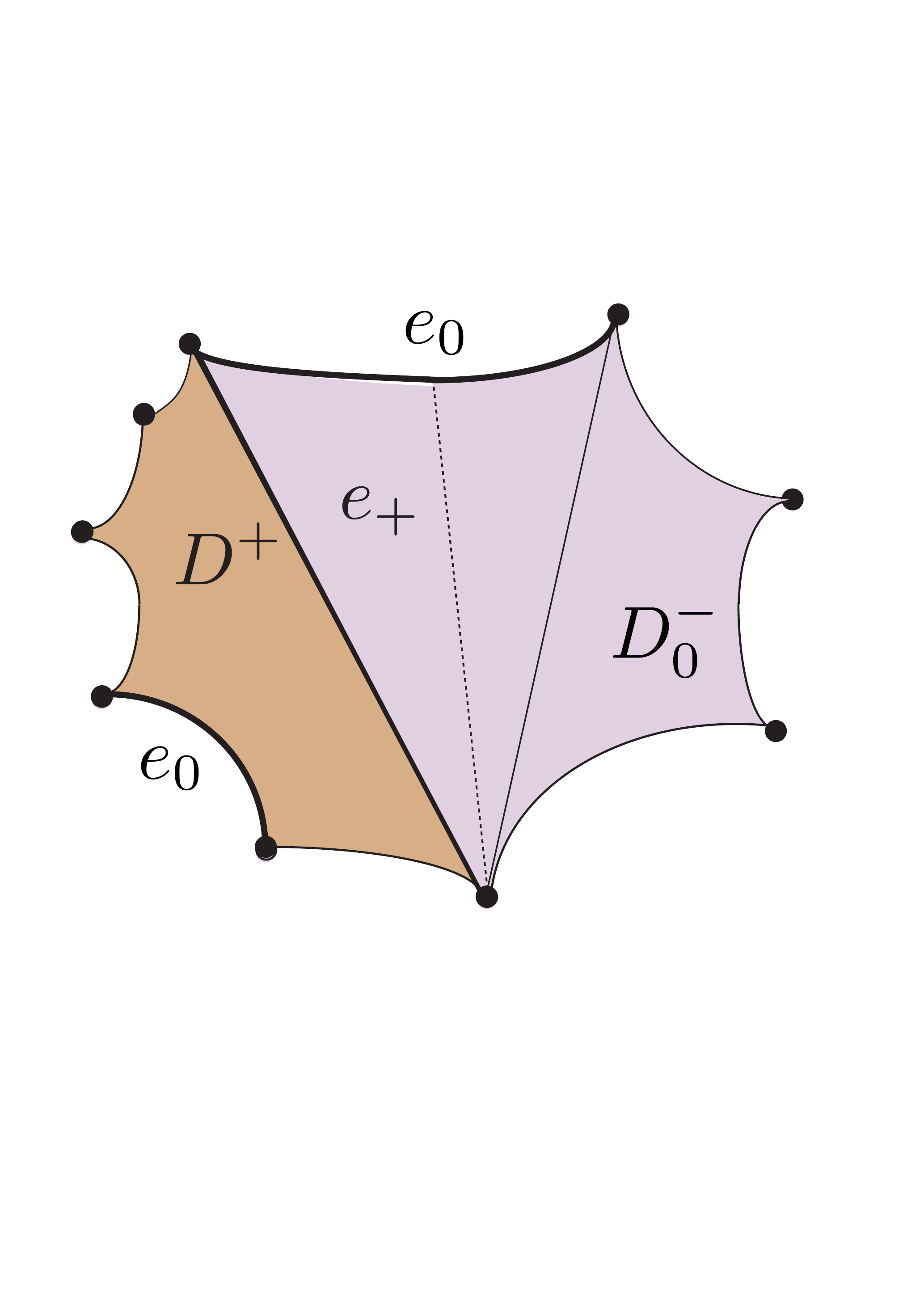} }               
  \subfloat[The new $\mathring \Delta$]{\label{fig:1b}\includegraphics[width=0.4\textwidth]{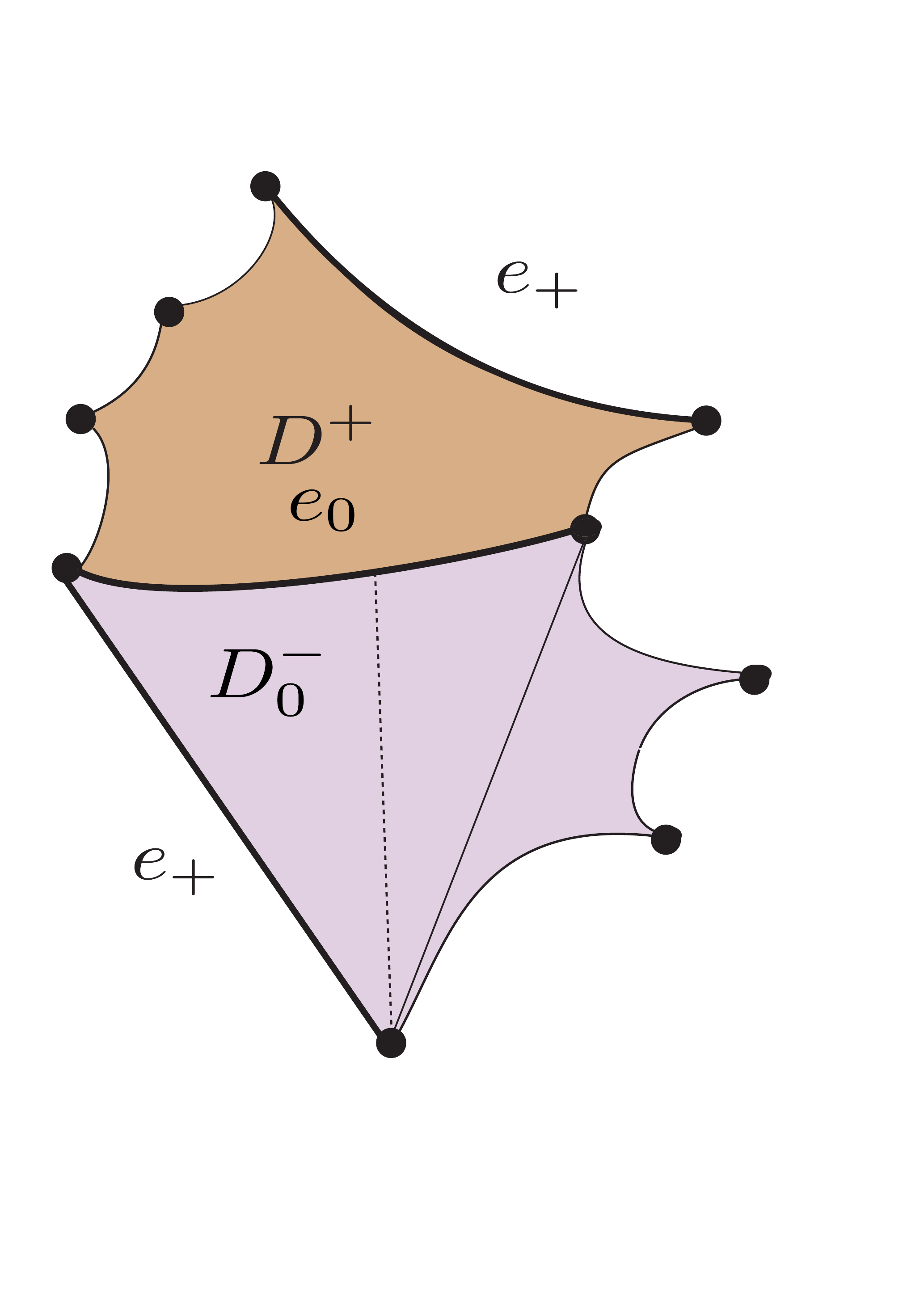}}
  \caption{Mutating the fundamental domain}
  \label{fig:1}
\end{figure}
Our proposition thus reduces to the following assertion
\begin{align}
 N(T,\mathring \Delta,\eta)\leq N(T,\Delta,\eta)-1, 
\end{align}
which we now prove.

Let $\phi$ be a collection of edges of $T$ and $i(\phi,\eta
)$ be the cardinal of the intersection of $\eta$ and $\phi$. To make things precise, $\phi$ is a collection of curves in $\Sigma$ (not in $D$) and thus there is no repetition of edges.

We now observe that if $\Delta$ is a fundamental domain and $\partial\Delta$ is the collection of edges of $T$ of the boundary of $\Delta$, then
$$
N(T,\Delta,\eta)=i(\partial\Delta,\eta)+1.
$$
Now by construction
$$
\partial\mathring \Delta=\left(\partial\Delta\setminus\{e_0\}\right)\cup \{e_+\}.
$$ 
Thus 
$$
N(T,\Delta,\eta)-N(T,\mathring \Delta,\eta)=i(e_0,\eta)-i(e_+,\eta).
$$
But since $\eta_1$ is embedded and goes from the vertex $x_0$ to $e_0$, every $\eta_i$ that intersects $e_+$ intersects $e_0$ as well (See figure \ref{fig:delta}). Furthermore, since $\eta_1$ intersects $e_0$ but not $e_-$ it follows that
\begin{align}
  i(e_0,\eta)-i(e_+,\eta)=1 +i(e_-,\eta)\geq 1.
\end{align}
This is what we wanted to prove.
\begin{figure}
  \centering
  \includegraphics[width=3in]{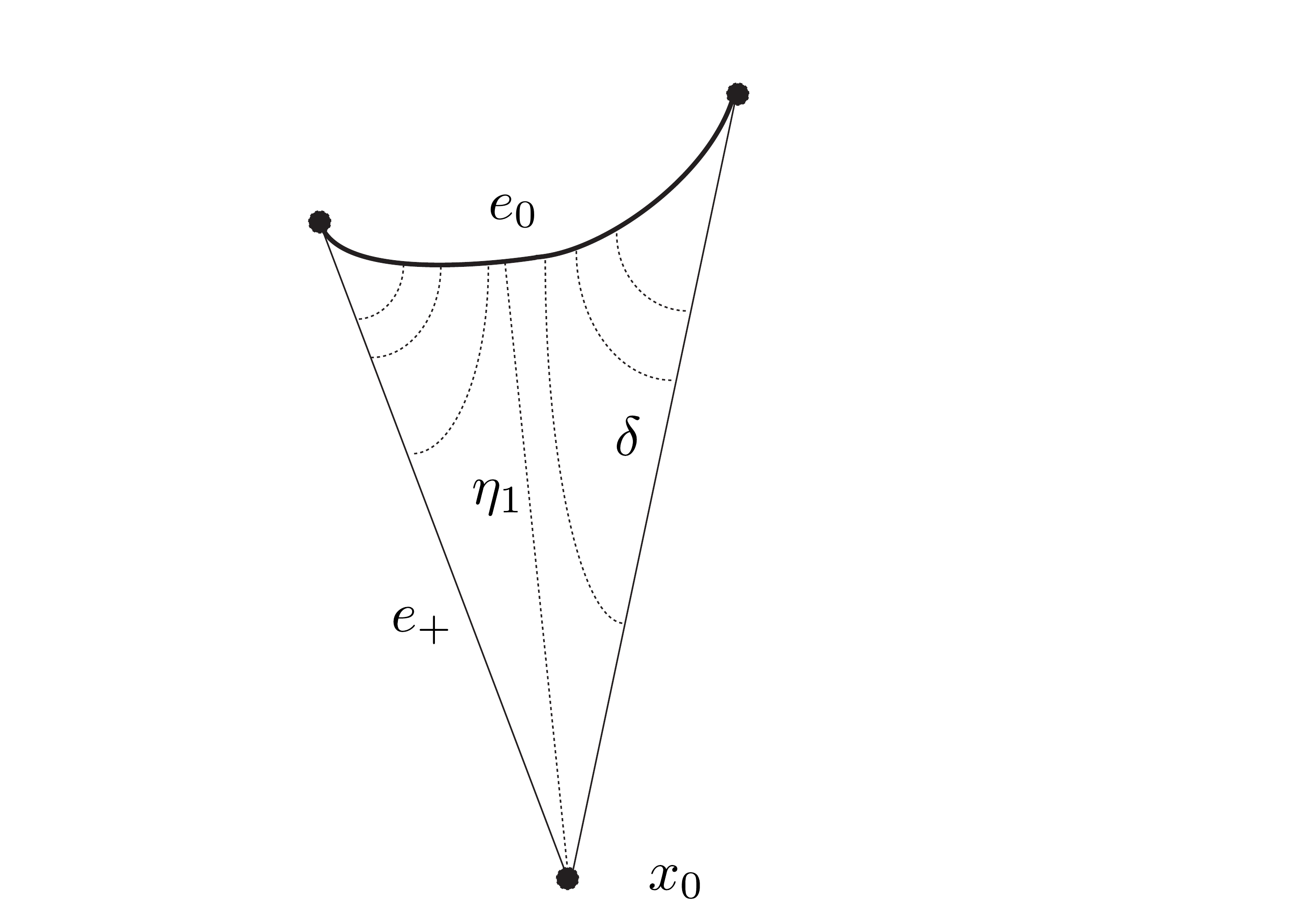}
  \caption{Intersection of $\eta$ with $e_0$ and $e_+$}
  \label{fig:delta}
\end{figure}
\subsection{Proof of proposition \ref{reduccomp2}}

By assumption, $n(T,\Delta,\eta)\geq 2$. Thus if $\delta_1$ and $\delta_2$ are the first triangles that $\eta_1$ encounters they are both in $D$. Thus, after flipping $\delta_1\cup\delta_2$ along their common edge, we see that $\Delta$ is still a fundamental domain for $\mathring T$ and thus obviously
\begin{align}
 N(\mathring T,\Delta,\eta)&=N(T,\Delta,\eta).
\end{align}
Also, in the new triangulation of $D$,  $\eta_1$ hits one less triangle, thus
\begin{align}
  n(\mathring T,\Delta,\eta)&=n(T,\Delta,\eta)-1,
\end{align}
This concludes the proof.

\subsection{The coding}

We now prove the coding theorem. The injectivity of the coding follows from Proposition \ref{orderP}. The sujectivity from the following

\begin{proposition}
There exists constant $A$ and $B$ depending only on the topology of $S$ so that, 
if $f$ be any simple loop based at $x_0$, if  $T_0$ is any triangulation, then there exists $e$ in $T_0$, an element $g$ of $\Fd$ such that
\begin{align}
  \pi(g\cdotp\F(T_0,e))=f,\label{def:coding}
\end{align}
and moreover, denoting by $\lambda$ the word length of $g$,  we have the rough inequality
\begin{align}
 \lambda\leq  A\cdotp i(f,T_0)+B
\end{align}
\end{proposition}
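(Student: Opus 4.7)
The plan is to prove the existence of the coding by induction on the complexity $C(T,\Delta,f)=(n,N)$ in lexicographic order, alternating the two reductions from Propositions~\ref{reduccomp1} and~\ref{reduccomp2}, while reading off the word $g\in\Fd$ and the initial edge $e$ as we go. I would first fix any fundamental domain $\Delta_0\colon D_0\to S$ for $T_0$; if $f$ is already an edge of $T_0$ there is nothing to reduce, so assume otherwise. Let $e_1$ be the first edge of $T_0$ that $f$ meets; this is the edge $e$ promised by the statement, and $(T_1,e_1')\defeq\F(T_0,e_1)$ is the first reduction.

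The inductive loop is then: while $f$ is not an edge of the current triangulation $T_k$, if $n(T_k,\Delta_k,f)>1$ apply Proposition~\ref{reduccomp2} to flip the first edge $e_{k+1}$ that $f$ crosses, producing $(T_{k+1},e_{k+1}')\defeq\F(T_k,e_{k+1})$ and lowering $n$ by one; if instead $n=1$ and $N>2$, apply Proposition~\ref{reduccomp1} to replace $\Delta_k$ by $\mathring\Delta_k$ with strictly smaller $N$ and with $n\geq 2$. The complexity drops strictly in lexicographic order, so the process terminates with $n=N=1$, meaning $f$ is an edge of the terminal triangulation. The crucial geometric point, to be checked locally inside each flipped quadrilateral, is that each application of Proposition~\ref{reduccomp2} contributes exactly one letter of $\Fd$: after the flip, the next first edge $e_{k+2}$ crossed by $f$ is precisely $s(e_{k+1}')$ for the orientation of $e_{k+1}'$ prescribed by $\F$, and the side of $e_{k+1}'$ on which the initial segment of $f$ lies determines whether the contributed letter is $\R$ or $\L$. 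Concatenating these letters produces $g\in\Fd$ with $\pi(g\cdotp\F(T_0,e_1))=f$.

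For the length bound I observe that the number of triangles in any fundamental domain equals the topological constant $-2\chi(S)$, which uniformly bounds $n$ throughout the process, so between two consecutive applications of Proposition~\ref{reduccomp1} at most $-2\chi(S)-1$ applications of Proposition~\ref{reduccomp2} intervene. Since Proposition~\ref{reduccomp1} strictly decreases $N$, it is used at most $N(T_0,\Delta_0,f)-1$ times; and since each edge of $T_0$ appears at most twice on $\partial\Delta_0$, we have $N(T_0,\Delta_0,f)=i(\partial\Delta_0,f)+1\leq 2\,i(f,T_0)+1$. Combining these yields $\lambda\leq A\cdotp i(f,T_0)+B$ for constants depending only on $\chi(S)$. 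Uniqueness of the code was already furnished by Proposition~\ref{orderP}. The main obstacle I anticipate is the orientation-sensitive bookkeeping in the geometric observation above: one must verify that the sequence of local ``flip first edge'' operations produced by the reduction is genuinely realized by a \emph{single} element of $\Fd$ acting on $(T_0,e_1)$, which reduces to showing that each flipped diagonal coincides with $s$ applied to the previous one, with a coherent choice of $\R$ versus $\L$ at every step.
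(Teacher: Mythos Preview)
Your proposal is correct and follows essentially the same approach as the paper: alternate Propositions~\ref{reduccomp1} and~\ref{reduccomp2} to drive the complexity $C(T,\Delta,f)$ down to $(1,1)$, reading off one letter of $\Fd$ per application of Proposition~\ref{reduccomp2}, and bound $\lambda$ by (number of triangles) $\times\, N(T_0,\Delta_0,f)$ together with $N(T_0,\Delta_0,f)-1=i(f,\partial\Delta_0)\leq i(f,T_0)$. The paper's own proof is terser and in particular does not spell out the orientation bookkeeping you flag; your explicit identification of that step (that the next first-crossed edge is $s$ of the previous flipped diagonal, with the side of $\eta$ determining $\R$ versus $\L$) is exactly what is implicitly used, so there is no gap.
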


By definition, the  length $\lambda$ of the element $g$ satisfying Equation \eqref{def:coding} will be called the {\em tree length} of $f$ and denoted $\lambda_{T_0}(f)$ where the subscript is omitted when no confusion is possible, similarly $i(f,T_0)$ will be called the {\em intersection length} of $f$.

\begin{proof} We first want to prove the existence of $g$ satisfying Equation \eqref{def:coding}. 

Let $\eta$ be a simple geodesic starting at the puncture $x_0$. 
Let us choose a fundamental domain $\Delta$ for the initial triangulation $T$.

From the proof in the preceding paragraph,  one sees that 
\begin{align}
\lambda(f)\leq \sharp\{T_0\}\cdotp( N(T_0,\Delta,f)+1).
\end{align}

Similarly one observes that 
\begin{align}
  N(T_0,\Delta,f)-1=i(f,\partial\Delta)\leq i(f,T_0)\leq 2\cdotp\sharp\{T_0\}\cdotp N(T_0,\Delta,f).
\end{align}

The result follows. \end{proof}

\subsection{Spiraling}

In this section, we equip $\Sigma$ with an auxiliary complete hyperbolic metric of finite volume. Let  $L(x_0)$ be the set of oriented simple geodesic arcs from $x_0$ to $x_0$ and $\mc P$ be the set of embedded paths in the tree $\mc T$. Let also $\mc Q$ be the set of rational paths as defined in Paragraph \ref{sec:rat-path}. Given a complementary region $e$, recall that we have two rational paths $\partial^+e$ and $\partial^- e$.

Our goal is two prove that the simple arcs labelling  $\pi_n(\partial^\pm e)$ both converges to laminations spiralling around a closed geodesic in a precise sense. 

We first define the closed geodesic which the lamination spirals around. For $f \in L(x_0)$, define $f^R$ to be the oriented closed geodesic in $\Sigma$ homotopic to $f$ in $S$ on the right of $f$. Then $f$ and $f^R$ bounds a cylinder $Cyl^R(f)$ in $\Sigma$ with one cusped geodesic boundary corresponding to $f$ on the left, and one geodesic boundary corresponding to $f^R$ on the right. Let $f_{\infty}^R$ be the unique geodesic lamination in $Cyl^R(f)$ starting from $x_0$ and spiralling around $f^R$, respecting the orientation. 

We have a similar definition for $f^L$, $Cyl^L(f)$ and $f_{\infty}^L$, replacing right by left in the above. 

If now $F$ is a complementary region, by an abuse of notation, we denote also by $\pi_n(\partial^iF)$ the closed geodesic arc from $x_0$ to $x_0$ representing the simple arc labelling the edge $\pi_n(\delta^iF)$ for $i=L$ or $i=R$. Our result is now

\begin{proposition}\label{spiral} Let $F$ be a complementary region labelled with the simple arc $f$.
The sequence of  geodesic arcs   $\{\pi_n(\partial^RF)\}_{n\in\mathbb N}$  converges to $f_\infty^R$ and similarly    $\{\pi_n(\partial^RF)\}_{n\in\mathbb N}$  converges to $f_\infty^L$.
\end{proposition}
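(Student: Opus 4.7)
We treat the convergence $\pi_n(\partial^R F) \to f_\infty^R$; the other case is symmetric. The plan is: (i) describe each simple geodesic arc $f_n \defeq \pi_n(\partial^R F)$, realized in the auxiliary hyperbolic metric, as a Dehn-twist-type iterate of $f$ around the closed geodesic $f^R$; (ii) use compactness of simple geodesics to pass to a subsequential Hausdorff limit; (iii) identify the limit as $f_\infty^R$ by uniqueness.

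For step (i), let $(T_n,e_n)\defeq \L^{n-1}\R\cdotp\F(T_0,e(F))$, so that $f_n=\pi(T_n,e_n)$. The key geometric claim is that one application of $\L$ to the pair $(T_n,e_n)$ produces an edge whose corresponding simple arc is homotopic rel endpoints at $x_0$ to the concatenation of $f_n$ with one copy of the loop $f^R$, performed on the right of $f$. I would verify this by unpacking the definitions of $\L$ and $\F$ in a universal cover: lift $f$ to a geodesic $\tilde f$ between two cusps, lift $f^R$ to its axis $\tilde f^R$ adjacent to $\tilde f$, and track how each flip inserts exactly one new crossing along $\tilde f^R$ into the lift of $e_{n+1}$. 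The planar cyclic order on $\mc T$, combined with the fact that $\partial^R F$ bounds $F$ on its \emph{right}, ensures that the spiraling is on the correct side and in the correct direction; the order-preserving property of $\Psi$ from Proposition \ref{orderP} makes this compatible with the cyclic order at $x_0$.

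For steps (ii) and (iii), since the $f_n$ are simple geodesics in the finite-area hyperbolic surface $\Sigma$ all sharing the asymptotic direction at the cusp $x_0$ determined by the initial edge $\R(F)$, the Birman--Series estimates imply that $\{f_n\}$ is relatively compact in the Hausdorff topology on closed subsets of $\Sigma$ and every limit is a geodesic lamination. Step (i) forces any subsequential limit $\mathcal L'$ to emerge from $x_0$, to lie in $\overline{Cyl^R(f)}$, and to spiral infinitely around $f^R$ with the prescribed orientation. Since $f_\infty^R$ is by definition the unique geodesic lamination starting at $x_0$, contained in $Cyl^R(f)$, and spiraling around $f^R$ respecting orientation, we conclude $\mathcal L'=f_\infty^R$. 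As every subsequential limit equals $f_\infty^R$, the full sequence converges.

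The main obstacle is the combinatorial content of step (i): identifying the repeated $\L$-iteration with a Dehn twist around \emph{that particular} closed geodesic $f^R$ rather than some other homotopy class. This requires a careful analysis of which triangle's diagonal is being flipped at each step, and of how the planar cyclic order around a tree vertex (the distinction between $\R$ and $\L$) matches the geometric cyclic order around $x_0$ in the surface. Once one $\L$-step is identified with one half-turn of the Dehn twist about $f^R$, iteration and the standard fact that Dehn-twist iterates of a geodesic arc converge to the infinite spiraling geodesic finish the argument.
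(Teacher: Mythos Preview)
Your approach differs from the paper's, and the difference matters because your step~(i) is where the real content lies---and it is not established.

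The paper's proof does \emph{not} identify the sequence $d_n\defeq\pi_n(\partial^R F)$ with Dehn--twist iterates. Instead it proceeds as follows: (a) every simple arc in $L(x_0)$ whose initial direction lies in $Cyl^R(f)$ must cross $f^R$, so $f_\infty^R$ is a one--sided bound for all $d_n$; (b) by the order--preserving Proposition~\ref{orderP}, the $d_n$ are monotone in the cyclic order at $x_0$, hence converge to some $l$; (c) the Dehn--twist iterates $D^m(d_0)$ form a \emph{separate} sequence converging to $f_\infty^R$; (d) if $l\neq f_\infty^R$, pick $m$ with $D^m(d_0)$ strictly between $l$ and $f_\infty^R$; the coding theorem and order preservation then force some $d_k$ to lie beyond $l$, a contradiction. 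The Dehn twist appears only as a comparison device in step~(d), never as a description of the $\L$--moves themselves.

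Your step~(i) asserts that one $\L$--move applied to $(T_n,e_n)$ is, up to isotopy, the Dehn twist $D$ about $f^R$ applied to $e_n$. You acknowledge this is the ``main obstacle'' and offer only a sketch. The difficulty is real: each $\L$--move flips an edge of the \emph{current} triangulation $T_n$, and that triangulation changes at every step, so the quadrilateral being flipped is not a priori the one governed by $f^R$. It is not clear that $d_{n+1}=D(d_n)$ holds exactly (as opposed to some weaker statement such as ``$d_n$ eventually winds $n$ times around $f^R$''), and your universal--cover sketch does not pin this down. The paper's argument sidesteps this entirely by using only monotonicity and the bijectivity of the coding.

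A smaller point: in step~(ii) you invoke Birman--Series for relative compactness. That theorem concerns Hausdorff dimension, not compactness; compactness of geodesic laminations is elementary. In any case, once you use order preservation to get monotonicity as the paper does, no compactness argument is needed at all---a bounded monotone sequence in the circle of directions at $x_0$ converges.

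In summary: your strategy could likely be made to work, but as written step~(i) is a genuine gap, and the paper's indirect argument via monotonicity plus a contradiction with an auxiliary Dehn--twist sequence is both shorter and avoids the delicate flip--by--flip bookkeeping you would need.
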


\begin{proof}
	The proof follows from (a) a simple observation  about geodesics in the cylinder $Cyl^R(f)$ originating from $x_0$, (b) Theorem \ref{theo:topcod2} on topological coding, and (c) proposition \ref{orderP} on order preserving. 
	
	First observe that every simple geodesic arc in $ L(x_0)$ whose beginning lies in $Cyl^R(f)$ must intersect $f^R$. In particular, the simple arcs $d_n:=\pi_n(\partial^RF)$ all intersect $f^R$ and $f_{\infty}^R$ is a left bound for $d_n$ with respect to the cyclic ordering. Next, the cyclic ordering implies that $d_n$ are monotone to the left, so $\lim_{n \rightarrow \infty}d_n$ exists.  Let $D$ be the positive Dehn twist about $f^R$, then $lim_{n\rightarrow \infty}D^n(d_0)=f_{\infty}^R$. If $\lim_{n \rightarrow \infty}d_n=l \neq f_{\infty}^R$, then there exists $m$ such that $D^m(d_0)$ is between $l$ and  $f_{\infty}^R$. But the order preserving property now implies that for $k$ sufficiently large, $d_k$ is between $f_{\infty}^R$ and $l$ which is a contradiction. Hence, $lim_{n\rightarrow \infty}d_n=f_{\infty}^R$.

\end{proof}

\subsection{Coding non self intersecting geodesics}
The previous proposition extends in the following way. Let $L_\infty(x_0)$ be the set of non properly embedded simple arcs from $x_0$. The proof of this Theorem follows immediately from original arguments by McShane's in \cite{McShane:1998}.

\begin{theorem}{\sc [Coding simple arcs and laminations]}\label{theo:exten}
The map $\Psi$ extends in a unique way to a bijection $\Psi_\infty$ from $\mc P$ to $L_\infty(x_0)$, so that if $\p\in\mc P$  then
$$
\lim_{n\to\infty}\Psi(\p^n)=\Psi_\infty (\p).
$$
\end{theorem}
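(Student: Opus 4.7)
The plan is to define $\Psi_\infty$ via the limit formula itself: for $\p \in \mc P$, set $\Psi_\infty(\p)$ to be the Hausdorff limit of the simple geodesic arcs $\Psi(\p^n)$, and then verify that this limit exists, lands in $L_\infty(x_0)$, and gives a bijection. Uniqueness is automatic, since any extension satisfying the displayed limit is determined by its values on finite truncations, so only existence, injectivity, and surjectivity require work.

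For existence of the limit, the key is that the planar cyclic order on $\mc T$ matches the cyclic order of geodesics leaving $x_0$. For each $n$, the edge $\p^n$ sits between two complementary regions of the tree whose rational boundaries $\partial^L$ and $\partial^R$ project via $\Psi$ to monotone sequences of simple arcs flanking $\Psi(\p^n)$ on either side; Proposition \ref{orderP} makes this precise. Proposition \ref{spiral} gives Hausdorff convergence of the flanking sequences to the spiraling laminations, so $\{\Psi(\p^n)\}_{n \in \mathbb N}$ converges by squeezing. The Birman--Series theorem \cite{Birman:1985} then identifies the Hausdorff closure of the space of simple geodesic arcs from $x_0$ with the space of simple geodesic laminations based at $x_0$, so the limit is in $L_\infty(x_0)$.

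Injectivity follows from Proposition \ref{orderP} together with the spiraling proposition: if $\p \neq \p'$ first diverge at some vertex of $\mc T$, they lie in different half-branches, and the arcs $\Psi(\p^n)$, $\Psi((\p')^n)$ are separated for all large $n$ by the $\partial^L, \partial^R$ rational boundaries of the common intermediate complementary region; passing to the Hausdorff limit preserves this separation because, by Proposition \ref{spiral}, those boundaries themselves converge to non-degenerate spiraling laminations that form a positive gap between the two limits.

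For surjectivity, given $\eta \in L_\infty(x_0)$, I would run the flipping dynamics $(\F,\R,\L)$ along $\eta$: the successive edges of the evolving triangulation crossed by $\eta$ determine, at each step, whether to apply $\R$ or $\L$, yielding an initial edge $e \in E$ and an infinite word in $\Fd$, equivalently a path $\p \in \mc P$. By Theorem \ref{theo:topcod} the finite truncations of $\p$ code the simple arcs obtained by canonically closing up initial segments of $\eta$, and these Hausdorff-converge back to $\eta$ by construction, proving $\Psi_\infty(\p) = \eta$. The main obstacle is to show that this flipping procedure is genuinely well-defined and non-terminating for non-properly-embedded $\eta$ --- in other words, that the reduction-of-complexity scheme of Propositions \ref{reduccomp1} and \ref{reduccomp2} can be iterated forever, producing an honest infinite path rather than stalling. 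This is the content of the original inductive argument of McShane \cite{McShane:1998} invoked just before the statement, and the subtlety lies in treating uniformly the spiraling case (which corresponds to rational $\p$) and the aperiodic case (irrational $\p$).
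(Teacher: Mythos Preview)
Your existence argument has a genuine gap. You claim $\{\Psi(\p^n)\}$ converges ``by squeezing'' once Proposition~\ref{spiral} gives Hausdorff convergence of the flanking rational boundaries. But squeezing only yields convergence if the left and right bounds tend to the \emph{same} limit, and nothing you wrote establishes that. Concretely, the rational paths just below $\p$ form a monotone sequence whose $\Psi_\infty$--values converge to some $\Psi^-_\infty(\p)$, and those just above to some $\Psi^+_\infty(\p)$; a priori $\Psi^-_\infty(\p)<\Psi^+_\infty(\p)$, and then $\Psi(\p^n)$ oscillates in the gap and need not converge at all. Proposition~\ref{spiral} is of no help here: it treats a \emph{fixed} complementary region and sends its two boundary paths to two \emph{distinct} spiraling laminations, so it certainly does not collapse the gap. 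Your appeal to Birman--Series is also misplaced --- that theorem bounds the Hausdorff dimension of the union of simple geodesics, it does not identify Hausdorff limits of arcs with laminations, and in any case it is not what is needed.

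The paper closes precisely this gap with a different tool: McShane's gap lemma, which asserts that between any two distinct non--properly--embedded simple arcs from $x_0$ there always lies a properly embedded one. Since the coding theorem assigns every properly embedded arc $w$ to a complementary region, and for irrational $\p$ one has either $\partial^R w<\p$ or $\partial^L w>\p$, every such $w$ lands outside $[\Psi^-_\infty(\p),\Psi^+_\infty(\p)]$; the gap lemma then forces $\Psi^-_\infty(\p)=\Psi^+_\infty(\p)$, and this common value is $\Psi_\infty(\p)$. You do cite McShane at the very end, but only in support of your surjectivity step; the lemma is actually the crux of existence. Once it is in place, both injectivity and surjectivity fall out of the order picture directly, and your separate dynamical argument for surjectivity (running the flip sequence forever along $\eta$) becomes unnecessary.
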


\begin{proof} We just proved that $\Psi$ extend to $\Psi_\infty$  for rational paths, $\Psi_\infty$ -- restricted to irrational path -- is monotone. Let us now consider an irrational path $\p$. Let us define
\begin{eqnarray*}
	\Psi^-_\infty(\p)&=&\sup\{\Psi_\infty(\q)\mid \q\in \mc Q, \q<\p\}\ ,\\
	\Psi^+_\infty(\p)&=&\inf\{\Psi_\infty(\q)\mid \q\in \mc Q, \q>\p\}\ .
\end{eqnarray*}

If $w$ is a properly embedded simple geodesic arc from $x_0$ to $x_0$, then either $\partial^Rw<p$ or $\partial^Lw>p$ for the lexicographic order on paths coming from the coding. Since $\Psi_\infty(\partial^Lw)<\Psi(w)<\Psi_\infty(\partial^Rw)$, it follows that either $\Psi(w)<\Psi^-_\infty(\p)$ of $\Psi^+_\infty(\p)<\Psi(w)$. Now by McShane's gap Lemma \cite{McShane:1998},  if we have two distinct non properly embedded simple arcs starting at $x_0$, then there is always a simple properly embedded arc in the sector between  them. Thus  $\Psi^-_\infty(\p)=\Psi^+_\infty(\p)$. This concludes the proof. \end{proof}

\section{Cross ratio and a harmonic form}\label{sec:cr}

We now explain in this section how the geometry -- be it  hyperbolic, or that corresponding to a higher rank representation of the surface group -- has a counterpart in our planar tree picture as a harmonic 1-form in the sense of the first section.
\subsection{Cross ratio}
We now consider the boundary at infinity $\bgrf$  of $\grf$. Every element $\alpha$ in $\grf$ has two fixed points on $\bgrf$: the attracting fixed point  $\alpha^+$, and the repelling fixed point $\alpha^-$. 

We consider a Hölder equivariant cross ratio $\bb$ on $\bgrf$ (See \cite{Labourie:2009wi,Labourie:2005a,Labourie:2005} for definitions).

Recall the the length of an element $\gamma$ is then
\begin{align}
  \ell(\gamma)\defeq\log\left(\bb(\gamma^-,\gamma^+,y,\gamma.y\right),
\end{align}
where $y$ is any element of $\bgrf$ not fixed by $\gamma$.

A triangle $\Delta$ in a triangulation $T$, gives rise to three peripheral elements $\alpha$, $\beta$ and $\gamma$ of $\grf$, so that the triple $(\alpha,\beta,\gamma)$ is well defined up to gobal conjugation.

Similarly an edge $e$ of the triangulation $T$ gives rise to four peripheral elements that we denote by
\begin{align}
\delta(T,e),\ \alpha_1(T,e),\ \gamma(T,e),\ \alpha(T,e),
\end{align}

which corresponds to the vertices of the lozenge defined the the two triangles bounded by $e$, labelled using the cyclic ordering and starting at the initial vertex of $e$.

\subsection{A divergence free vector field}  Then we define 
\begin{align}
\Phi(T,e)\defeq\log\bb\left(\delta^+(T,e),\delta^-(T,e),\alpha_0^+(T,e),\alpha_1^+(T,e)\right),\label{def:Phib}
\end{align}
where $\delta$ is the initial vertex of the edge, $\alpha_0$ and $\alpha_1$ are the vertices of $\Delta_0$ and $\Delta_1$ opposite to $e$. Then we have
\begin{proposition}{\sc[Divergence free]}
The vector field $\Phi$ is divergence free, moreover
\begin{align}
   \partial\Phi&=\ell(\partial S).
\end{align}
\end{proposition}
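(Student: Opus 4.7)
The plan is to derive both claims from the multiplicative cocycle identity for the cross ratio,
\[
\bb(a,b,c,d) \;=\; \bb(a,b,c,x)\cdot\bb(a,b,x,d),
\]
which becomes an additive identity after taking logarithms, and which matches precisely the harmonic structure on $\mc T$ together with the telescoping we need at the boundary.

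For the divergence-free property, I would fix a tree vertex of distance at least $1$ from the root with incoming edge labelled $(T,e)$; its two outgoing edges are labelled $\L(T,e)$ and $\R(T,e)$, and the task is to check $\Phi(T,e) = \Phi(\L(T,e)) + \Phi(\R(T,e))$. Write $e$ from $\delta$ to $\gamma$, with opposite vertices $\alpha_0, \alpha_1$ of the two triangles, so by definition $\Phi(T,e) = \log \bb(\delta^+,\delta^-,\alpha_0^+,\alpha_1^+)$. A direct analysis of $\R(T,e) = \F(T,s(e))$ shows that the flipped edge $\widehat{s(e)}$ still has $\delta$ as an endpoint and acquires $\gamma$ as one of the opposite vertices of its new lozenge; with the orientation conventions correctly tracked, this yields $\Phi(\R(T,e)) = \log \bb(\delta^+,\delta^-,\alpha_0^+,\gamma^+)$, and symmetrically $\Phi(\L(T,e)) = \log \bb(\delta^+,\delta^-,\gamma^+,\alpha_1^+)$. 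Applying the cocycle identity with $(a,b) = (\delta^+,\delta^-)$ and intermediate point $x = \gamma^+$ then delivers the harmonic relation.

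For the boundary identity, I would sum the $N$ contributions $\Phi(\F(T_0,e))$ over the oriented edges of $T_0$ cyclically arranged around the unique vertex $x_0$. Consecutive edges in this cyclic order share a triangle of $T_0$ and hence share a peripheral fixed point, which allows the cocycle identity to telescope the sum of logarithms term by term. A full cyclic tour of the link of $x_0$ corresponds to exactly one application of the peripheral element $\partial S \in \grf$ on the relevant peripheral fixed points. The telescoped outcome is a single expression of the form $\log \bb(\partial S^-,\partial S^+,y,\partial S \cdot y)$, which by the length formula recalled earlier in the excerpt is precisely $\ell(\partial S)$.

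The main obstacle will be the combinatorial bookkeeping: explicitly verifying at each flip that the new lozenge retains $\delta$ as an endpoint and acquires $\gamma$ as an opposite vertex, with all signs aligned; and arranging the boundary telescoping so that the shared peripheral fixed points cancel in pairs around the puncture, leaving only the $\partial S$-shift between the first and last term. Once the orientation conventions (which side is $\Delta_0$ versus $\Delta_1$, the sign of $(e,\widehat e)$, the direction of cyclic order at $x_0$) are fixed consistently, the whole argument reduces to repeated application of the cocycle identity.
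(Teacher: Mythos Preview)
Your proposal is correct and takes essentially the same approach as the paper: the paper's entire proof is the single sentence ``This proposition is an immediate consequence of the multiplicative cocycle properties of cross ratio,'' and what you have written is precisely the detailed unpacking of that sentence --- the cocycle identity $\bb(a,b,c,d)=\bb(a,b,c,x)\,\bb(a,b,x,d)$ applied once at each tree vertex for the harmonic relation, and telescoped around the link of $x_0$ for $\partial\Phi=\ell(\partial S)$. Your identification of the orientation bookkeeping as the only real work is accurate; once that is fixed the argument is exactly the one the paper has in mind.
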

This proposition is a immediate consequence of the multiplicative cocycle properties of cross ratio.

\subsection{A special case: hyperbolic surface with one cusp} Instead of explaining in detail the not so exciting proof of the previous example, we insted give another related example  of the construction of the harmonic 1-form in the case of an hyperbolic surface with cusp which is perhaps more illuminating. 

Let us consider an embedded horosphere $H$ around the cusp. Every triangulation $T$ defines an ideal  hyperbolic triangulation of the surface. Then every oriented edge $e$ of $T$ defines a point $e_H$ in $H$, namely the (first) intersection of $H$ with the geodesic associated to $e$.   Let then $K$ be the collection of these points, and given $e_H\in K$, let $e^+H\in K$ be the point immediately on the right and $e^-_H$ be the point immediately on the left (for some auxiliary orientation of $H$). Let finally $I(e)$, be the interval on $H$ with extremities $e^\pm_H$ and containing $e_H$. Define
$$
\Phi(T,e)=\frac{1}{2}\frac{\operatorname{length}(I(e))}{\operatorname{length}(H)}.
$$
An elementary geometric construction shows you that

\begin{proposition}
The form $\Phi$ is harmonic:
$$
\Phi(\R(T,e))+\Phi(\L(T,e))=\Phi(T,e).
$$
Moreover $\partial\Phi=1$.
\end{proposition}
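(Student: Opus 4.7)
My plan is to read off both claims from a single horocyclic picture in the upper half plane. Normalise the hyperbolic structure on $\Sigma$ so that one preimage of $x_0$ lies at $\infty$ and $H$ lifts to $\{y=y_0\}$ modulo the parabolic stabilizer. For each oriented edge $e$ of $T$, the lift whose initial end is at $\infty$ is a vertical geodesic $\{x=r_e\}$ crossing $H$ at $e_H=(r_e,y_0)$, and $I(e)$ is the union of the two basic horocyclic arcs adjacent to $e_H$ (one inside each of the triangles of $T$ incident to $e$ at its initial vertex). In this picture the operations $\R(T,e)=\F(T,s(e))$ and $\L(T,e)=\F(T,s'(e))$ become elementary manipulations of a single ideal quadrilateral.

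To prove the harmonic identity, I will show that $I(\R(T,e))$ and $I(\L(T,e))$ partition $I(T,e)$ into its right and left halves. Normalise so that $e$ is the vertical line $\{x=0\}$ and the third vertex of the right-triangle $T_R(e)$ lies at some $c>0$, so $e_H^+=(c,y_0)$. Then $s(e)$ is the semicircle from $0$ to $c$, and the second triangle along $s(e)$ has its fourth vertex at some $p\in(0,c)$; the flipped edge $\widehat{s(e)}$, the other diagonal of the resulting quadrilateral, is the geodesic from $\infty$ to $p$, a vertical line at $\{x=p\}$. With the orientation convention making the initial end of $\widehat{s(e)}$ equal to $\infty$, the new triangulation acquires a fresh horocyclic intersection $(p,y_0)$ strictly between $e_H$ and $e_H^+$, while all other local intersections are preserved; hence $I(\widehat{s(e)})$ in the new triangulation has extremities $e_H$ and $e_H^+$, coinciding with the right half of $I(T,e)$. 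A mirror-symmetric argument identifies $I(\widehat{s'(e)})$ with the left half of $I(T,e)$, and summing the two lengths and dividing by $2\operatorname{length}(H)$ gives the harmonic identity.

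For $\partial\Phi=1$, I will combine harmonicity with a direct count. The $N$ innermost complementary regions are labelled bijectively by the oriented edges $e$ of $T_0$; summing the harmonic identity over them produces $\sum_e\Phi(T_0,e)=\sum_e[\Phi(\F(T_0,s(e)))+\Phi(\F(T_0,s'(e)))]$, and since $s$ and $s'$ are bijections on the oriented edges of $T_0$ (each a disjoint union of three-cycles, one per triangle) the right-hand side reorganises into a multiple of $\partial\Phi$. Meanwhile $\sum_e\operatorname{length}(I(T_0,e))$ is evaluated directly from the horocyclic tiling: each basic arc of $H$ is adjacent to exactly two intersection points and so belongs to exactly two of the intervals $I(T_0,e)$, giving the total horocyclic length with the expected combinatorial factor and matching the normalisation $\partial\Phi=\ell(\partial S)=1$ inherited from the general cross-ratio statement. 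The main technical obstacle is pinning down the orientation convention in $\F$ so that the initial end of $\widehat{s(e)}$ is at $\infty$ rather than at $p$; otherwise $\widehat{s(e)}_H$ sits at a deck-translate of $p$ elsewhere on $H$ and the clean local halving picture fails. A careful check against the standard orientation of $\mathbb H^2$ disposes of this, after which the remaining verifications are immediate.
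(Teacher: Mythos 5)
Your proof of the harmonic identity is essentially correct, and it is surely the ``elementary geometric construction'' the paper alludes to without writing down: in the upper half-plane with the initial vertex of $e$ at $\infty$, the flip at $s(e)$ inserts exactly one new point $(p,y_0)$ of $K$ into the corner of the right-hand triangle of $e$ at $\infty$; the two new triangles $(\infty,0,p)$ and $(\infty,p,c)$ force its neighbours in the new $K$ to be $e_H$ and $e_H^+$, so the interval attached to $\R(T,e)$ is $[e_H,e_H^+]$, and with the mirror statement for $\L$ the two intervals partition $I(e)$. The orientation worry you flag is real but is settled by consistency with the order-preserving property (Proposition \ref{orderP}): the coding requires the horocyclic point of $\pi(\R(T,e))$ to lie in the arc immediately to the right of $e_H$, which is exactly the configuration your argument needs.

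The second claim is where the proposal has a genuine gap: you never pin down the ``combinatorial factor'', and it does not come out to $1$. Summing your harmonic identity over the oriented edges of $T_0$ gives $\sum_e\Phi(T_0,e)=\sum_e\Phi(\F(T_0,s(e)))+\sum_e\Phi(\F(T_0,s'(e)))=2\,\partial\Phi$, since $s$ and $s'$ are bijections and $\partial\Phi$ is by definition the sum over $S(1)$, whose edges are labelled by the $\F(T_0,e)$. Your horocyclic count (each arc of $H\setminus K$ lies in exactly two of the intervals $I(e)$) gives $\sum_e\Phi(T_0,e)=\tfrac12\cdot 2=1$. Combining the two, your argument proves $\partial\Phi=\tfrac12$, not $1$. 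A direct computation confirms this: the interval attached to $\F(T_0,e)$ is the corner of the flip quadrilateral at the initial vertex of the new diagonal, i.e.\ a single corner of a single triangle of $T_0$, and as $e$ runs over the oriented edges each corner of $T_0$ occurs exactly once, so $\partial\Phi=\tfrac12\cdot\operatorname{length}(H)/\operatorname{length}(H)=\tfrac12$. So with the stated normalisation $\Phi(T,e)=\tfrac12\operatorname{length}(I(e))/\operatorname{length}(H)$ the conclusion $\partial\Phi=1$ cannot be reached (one must either drop the factor $\tfrac12$ from the definition of $\Phi$ or accept $\partial\Phi=\tfrac12$); your appeal to ``matching the normalisation $\partial\Phi=\ell(\partial S)=1$'' is circular, since that is the assertion to be proved and, for a cusp, $\ell(\partial S)$ is not $1$ in any case. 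The entire content of the second assertion is this factor of $2$, and the proposal waves it away at precisely the step where it must be confronted.
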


\subsection{Gaps and pairs of pants}

Let us move back again to the case of a general cross ratio and make the connection with \cite{Labourie:2009wi}.

Recall that a pair of pants is a regular homotopy class of immersion of the plane minus two points -- that we identify with $S^1\times ]-1,1[ \setminus\{1,0\}$ for later use -- in $S$. A pair of pants is alternatively described by  a triple $(\alpha,\beta,\gamma)$ of elements of $\grf$, defined up to conjugation so that $\alpha\cdotp\gamma\cdotp\beta=1$. The set of pairs of pants is then $\grf^3/\grf$ where the latter action is by conjugation.  A pair of pants is embedded if it can be represented by an embedding.

Then, following \cite{Labourie:2009wi}\footnote{We us a different convention for cross ratio}.

\begin{definition}
Let $\bb$ be a cross ratio. The {\em gap} of  the pair of pants $P=(\alpha,\beta,\gamma)$ with respect to the cross ratio $\bb$ is 
\begin{equation}
  \G_\bb(P)\defeq \log\left(\bb(\alpha^+,\alpha^-,\gamma^-,\beta^+)\right).
\end{equation}
\end{definition}
Observe that a  simple loop $\epsilon$ passing though $x_0$ define a pair of pants $P(\epsilon)$, associated to the embedding of $S^1\times ]-1,1[ \setminus\{1,0\}$ corresponding the the identification of a tubular neighbourhood of $\epsilon$ with $S^1\times ]-1,1[$.

We saw that if $T$ is a triangulation, then $\epsilon$ labels exactly one complementary   region  $f(\epsilon)$ of the tree $\mathcal T$. In particular, one can associate a gap to the harmonic function associated to ${\rm b}$.

The two notions of gap coincide.
\begin{proposition}
Let $\Phi$ be the associated divergence free vector field to a cross ratio $\bb$ as in Equation \eqref{def:Phib}. Then if $f$ is a complementary region,
\begin{equation} 
  \G_\Phi(f(\epsilon))=\G_\bb(P(\epsilon)).
\end{equation}

\end{proposition}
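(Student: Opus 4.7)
By the previous proposition, the gap can be rewritten probabilistically as
\begin{equation*}
 \G_\Phi(f(\epsilon)) \;=\; \tfrac{1}{2}\bigl(\Phi_\infty(\partial^L f(\epsilon))+\Phi_\infty(\partial^R f(\epsilon))\bigr),
\end{equation*}
so the plan is to identify each limit $\Phi_\infty(\partial^{L/R} f)$ as a log cross ratio and then recombine them into the single cross ratio defining $\G_\bb(P(\epsilon))$.

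The starting point is that the definition \eqref{def:Phib} of $\Phi(T,e)$ is itself a log cross ratio in the four vertices of the lozenge at $e$, and the harmonic equation $\Phi(e)=\Phi(\L e)+\Phi(\R e)$ is exactly the logarithmic form of the multiplicative cocycle identity
\begin{equation*}
 \bb(a,b,c,d)\;=\;\bb(a,b,c,x)\cdotp\bb(a,b,x,d),
\end{equation*}
with $(a,b)$ the pair of vertices common to three adjacent lozenges and $x$ the new vertex produced by the flip. Iterating this cocycle telescoping along the path $\partial^R f=\bigcup_n\L^n\R(f)$ will produce a cross-ratio expression for $\Phi(\L^n\R f)$ in which two of the four points remain anchored, up to a common $\grf$-translate, to vertices of the initial lozenge, while the remaining points are carried along the spiral by successive applications of the hyperbolic element representing $\gamma^R$.

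By the spiralling Proposition~\ref{spiral}, the simple arcs $\Psi(\L^n\R f)$ converge to the lamination $\epsilon^R_\infty$ around $\gamma^R$. Lifted to the universal cover, the moving vertices of the lozenges accordingly have peripheral fixed points in $\bgrf$ converging to one of the endpoints of the axis of $\gamma^R$ (with sign fixed by the direction of the right spiral). Hölder continuity of $\bb$ then permits passage to the limit, and yields an expression involving the fixed points of $\alpha$ (the peripheral class of $\epsilon$) together with those of $\gamma^R$. The symmetric analysis along $\partial^L f$ gives the analogous expression involving $\alpha$ and the boundary class $\beta$. A final application of the cocycle identity collapses the two limits into a single log cross ratio in the four fixed points $\alpha^\pm,\gamma^-,\beta^+$, which equals $2\cdotp\G_\bb(P(\epsilon))$; dividing by $2$ gives the statement.

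The main technical obstacle lies in the telescoping step: controlling precisely which vertices of the lozenge are anchored and which are swept out by the spiral, and identifying the $\bgrf$-limits of the moving vertices with the correct endpoints of the axes of $\gamma^R$ and $\beta$. This calls for a careful induction on the flip sequence $\L^n\R$, combined with the $\grf$-equivariance of $\bb$ to keep the anchored vertices in place along the spiral. A secondary point is extending $\bb$ to the degenerate configuration $\alpha^+=\alpha^-$ at the cusp, which is covered by the Hölder-equivariant formalism of \cite{Labourie:2005,Labourie:2009wi}.
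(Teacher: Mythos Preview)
Your plan is correct and rests on the same two ingredients the paper uses: the multiplicative cocycle identity for the cross ratio (which underlies harmonicity of $\Phi$) and the spiralling Proposition~\ref{spiral} to identify the limiting boundary points. The only difference is organizational. You propose to compute the two limits $\Phi_\infty(\partial^Lf)$ and $\Phi_\infty(\partial^Rf)$ separately as $\log$-cross-ratios and then merge them at the end via one more cocycle identity. The paper instead merges first: an induction on $n$ (again just the cocycle) shows that the \emph{finite} gap $G^n_\Phi(f)=\tfrac{1}{2}\bigl(\Phi(\L^n\R f)+\Phi(\R^n\L f)\bigr)$ is already a single $\log$-cross-ratio $\log\bb(\alpha^+,\alpha^-,\gamma_n^-,\beta_n^+)$, with $\gamma_n$ and $\beta_n$ the arcs labelling $\R\L^{n-1}\R f$ and $\L\R^{n-1}\L f$; then a single application of Proposition~\ref{spiral} sends $\gamma_n^-\to\gamma^-$ and $\beta_n^+\to\beta^+$. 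This bypasses the bookkeeping you flag as the ``main technical obstacle'': combining at finite $n$ means you never have to track the two moving lozenge vertices of a single term $\Phi(\L^n\R f)$ separately (which is where your sketch is vaguest), since the shared vertex cancels immediately in the product. Your route works, but the paper's ordering makes the induction shorter and the final recombination step unnecessary.
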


\begin{proof} This follows from Proposition \ref{spiral}. Indeed, by construction, if $f$ is a complementary region in  $\mc T$ an induction shows that 
$$
G^n_\Phi(f)={\rm b}(\alpha^+,\alpha^-,\gamma_n^-,\beta_n^+),
$$
where $\gamma_n$ is the simple arc associated to the edge $\R\cdotp\L^{n-1}\cdotp\R f$, and $\beta_n$ is the simple arc associated to the edge $\L\cdotp\R^{n-1}\cdotp\L f$. Then by Proposition \ref{spiral}, $\gamma_n$ converges to $\gamma=\partial^Rf$ and $\beta_n$ converges to $\beta=\partial^Lf$.
\end{proof}

Similarly in the case of hyperbolic surfaces with cusp one can get explicit formulas for the gaps in terms of lengths of simple curves bounding the pair of pants as in \cite{McShane:1998}, \cite{Mirzakhani:2007tt} and \cite{Labourie:2009wi}.

\section{The error term and the Birman--Series Theorem}\label{sec:BS}

Our Gap Formula for planar trees now reads, since every every complementary region is labelled by a simple loop passing though $x_0$, as
$$
\ell(\partial S)=\E(\Phi) + \sum_{\epsilon\in L(x_0)} \G_b(P(\epsilon)),
$$
where $L(x_0)$ is the set of simple loops up to homotopy passing though $x_0$.
To recover the  McShane's identity, we need to prove that the error term vanishes.

As in McShane's original proof and most of the subsequent proofs -- except notably the proof by Bowditch in \cite{Bowditch:1996} -- the vanishing of the error term relies on Birman--Series Theorem \cite{Birman:1985}: the closure of the reunion of the space of simple closed geodesics has Hausdorff dimension 1.

We will sketch this proof this only in the context of hyperbolic surface with one cusp and use the apparatus developed in paragraph \ref{sec:rv}.

In this context $\mathbb R/\partial\phi\mathbb Z$ is identified with the horosphere $H$ centered at $x_0$.  Then if $e$ is a simple loop passing though $x_0$ corresponding to an edge  in $S(n)$ of the tree $\mathcal T$, $Y_n(e)$ is the intersection of $e$ with the horosphere centered at $x_0$ -- normalized so that it has length one.  If $\p$ is an irrational path corresponding to an infinite simple geodesic $\gamma$ initiating at the cusp, we obtain that $X^-_\infty(\p)=X^+_\infty(\p)$ which corresponds to the intersection of $\gamma$ with the horosphere $H$. 

Then $\overline{X_\infty(\mc P\setminus\mc Q)}$ is included in the closure of the intersection of the horosphere  $H$ with the union of simple geodesics initiating in $x_0$. Thus, according to the Birman--Series Theorem, $\lambda(\overline{X_\infty(\mc P\setminus\mc Q)})=0$. Hence, by Theorem \ref{cor:ErrXinf}, the error term vanishes.

\section{Surfaces with more than one boundary}\label{sec:codBD} If $S$ has more than one puncture, $x_0, \ldots, x_{n-1}$, $n \ge 2$, we may still define $\Sigma=S\setminus\{x_0, \ldots, x_{n-1}\}$ and a triangulation of $\Sigma$ such that the set of vertices is $\{x_0, \ldots, x_{n-1}\}$. Again it is useful to endow $S$ with an auxiliary complete hyperbolic structure so that $x_0, \ldots, x_{n-1}$ correspond to cusps. We choose a distinguished cusp, say $x_0$, and this time, we let $L(x_0)$  be the set of oriented simple geodesic arcs starting from $x_0$ and terminating in another (not necessarily distinct) cusp. We can now  define $\mc S$ to be the space of pairs $(T,e)$ where $T$ is an ideal triangulation of $S$ and $e$ is an oriented edge of $T$ which is in  $L(x_0)$. 

The transformations $\F$, $\L$ and $\R$ are defined as before, with some slight modifications. To start with, note that $\F$ is defined  on the pairs $(T,e)$, where $e$ is an edge of $T$ but it is not necessary that  $e \in  L(x_0)$ ($e$ may originate from a different cusp).  On the other hand, the transformations $\L$ and $R$ may not be defined on $(T,e) \in \mc S$ for certain non-proper triangulations $T$ as we shall see later. Nonetheless, where $\L$ or $\R$ is defined on $(T,e)$, then $\L(T,e) \in \mc S$ and $\R (T,e) \in \mc S$. We now form the rooted tree as in the previous case, with the following modifications:

\begin{itemize}
	\item The root $v_0$ of the tree is an ideal triangulation $T_0$ of $S$ with vertices in the set $x_0, \ldots, x_{n-1}$.
	
	\item The neighbors of $v_0$ are the pairs  $(\mathring T, \mathring e) \in \mc S$ such that $(\mathring T, \mathring e)=\F(T_0,e)$ for some oriented edge $e$ of $T_0$ which is not necessarily in $L(x_0)$ (note however that we do require that $\mathring e \in L(x_0)$). In particular, it is possible for $v_0$ to be of valence 1 which occurs when  only one edge of $T_0$ (counted twice) is in $L(x_0)$. By construction, all the neighbors of $v_0$ are labeled by some element  $(T,e) \in \mc S$ and $e$ is adjacent to two distinct triangles of $T$.
	
	\item For a vertex $v$ labeled by $(T,e)$ at distance $n \ge 1$ from $v_0$, $\R(T,e)$ and $\L(T,e)$ are defined as before, provided that the corresponding flip moves are possible. Each admissible move results in a vertex at distance $n+1$ from $v_0$ adjacent to $v$.
	
	\item In the case of a vertex $(T,e)$ which for which $\R(T,e)$ is not defined, we call the vertex {\em right blocked}. Similarly, if $\L(T,e)$ is not defined, we call the vertex {\em left blocked}.

	\item A vertex $(T,e)$ is right blocked if $s^2(e)=\overline{s(e)}$  or equivalently, if $e$ bounds a punctured disk on the (see figure \ref{fig:Rightblocked}) . An analogous  statement holds for left blocked vertices .  If $S$ is not a thrice punctured sphere, then  a vertex $v \neq v_0$ cannot be both left blocked and right blocked. Hence the vertices of the rooted tree apart from the root have valence two or three.
	
\end{itemize}

\begin{figure}[h]
	\includegraphics[width=0.6\textwidth]{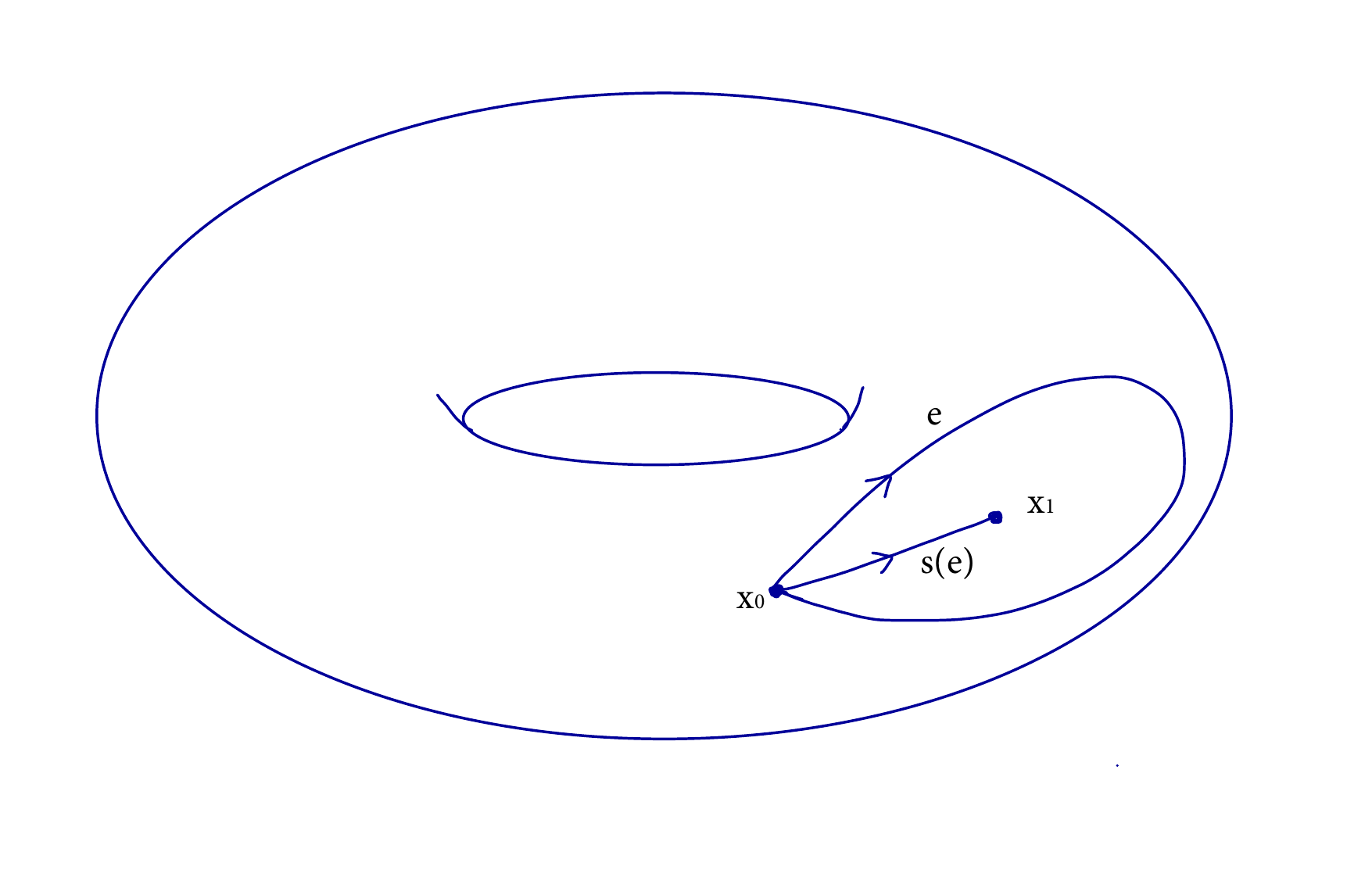}               
	\caption{An example of $(T,e)$ which is right-blocked. Here $S$ is a torus with two punctures $x_0, x_1$ and only the edges $e$ and $s(e)$ of the triangulation $T$ are shown.} \label{fig:Rightblocked}
\end{figure}

With this set-up, we have the following generalization of the topological coding theorem, with essentially the same proof:

\begin{theorem}{\sc [Topological coding for surfaces with more than one boundary]}
	Let $T_0$ be an ideal triangulation and $f$ an oriented topological embedded arc on $S$ starting from $x_0$ and ending in $\{x_0, \ldots, x_{n-1}\}$. Then, either $f$ is an edge of $T_0$ or there exists a unique oriented edge $e$ of $T_0$ and a unique element $g$ in $\Fd$ such that
	$\pi(g\cdotp\F(T_0,e))=f$. 
\end{theorem}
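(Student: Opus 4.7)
The plan is to mirror the argument for Theorem \ref{theo:topcod}. As in the one-puncture case, the statement splits into a uniqueness claim, to be established via an order-preserving property, and an existence claim, to be established by a complexity reduction.

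For uniqueness, I will first extend Proposition \ref{orderP} to the present setting. The cyclic order on $L(x_0)$ is still well defined: order arcs by the first intersection of their initial segment with a small horocycle at $x_0$. The map $\Psi(e,g) \defeq \pi(g \cdot \F(T_0,e))$ now takes as input a pair $(e,g)$ where $e$ is any oriented edge of $T_0$ and $g \in \Fd$ is such that the corresponding sequence of $\R$ and $\L$ moves is admissible, i.e.\ avoids blocked vertices. The inductive proof of Proposition \ref{orderP} then adapts essentially verbatim, restricted to such admissible pairs: at each level of the tree the newly created arcs alternate on the horocycle with the previously existing ones, so the lexicographic order on the domain still matches the cyclic order on $L(x_0)$, giving injectivity of $\Psi$.

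For existence, the key observation is that Propositions \ref{reduccomp1} and \ref{reduccomp2} do not use the assumption that $f$ ends at $x_0$: they use only the fact that $f$ is a simple arc whose initial endpoint is a vertex of $T$, and that one can build a fundamental domain $\Delta$ unfolding the initial lift $\eta_1$ of $f$. The complexity pair $(n(T,\Delta,f), N(T,\Delta,f))$ therefore still decreases under the same combination of edge flips and fundamental domain mutations, and the induction terminates once $f$ has become an edge of some triangulation $T'$. Unwinding the finite sequence of flips produces the desired pair $(e,g) \in E \times \Fd$ with $\pi(g\cdotp\F(T_0,e))=f$.

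The main obstacle will be to verify that the sequence of flips dictated by the complexity reduction never attempts a blocked move. Recall that a blocked vertex $(T',e')$ corresponds to $e'$ (on the right or left) bounding a punctured monogon, so that the relevant ``triangle'' adjacent to $e'$ is degenerate. I expect to argue that this cannot occur along our sequence of flips: at each step, the edge being flipped is the next edge of the current triangulation that $f$ actually crosses, and a simple arc cannot cross the boundary of a once-punctured monogon and exit it again without terminating at the enclosed puncture. Consequently, the only scenario in which the reduction can meet a blocked vertex is when $f$ itself terminates at that enclosed puncture; but then the currently tracked edge is already $f$, so the induction has already halted. This should close the proof.
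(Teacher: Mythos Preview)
Your proposal is correct and follows the paper's approach exactly: the paper offers no argument beyond the phrase ``with essentially the same proof'', and your sketch --- order-preserving for injectivity, complexity reduction via Propositions~\ref{reduccomp1} and~\ref{reduccomp2} for surjectivity, with explicit attention to the new blocked-vertex phenomenon --- is precisely that strategy, spelled out in more detail than the paper itself provides.

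One small correction to your last paragraph. In a right-blocked configuration $(T',e')$, the loop $e'$ is the \emph{boundary} of the punctured monogon, whereas the edge you would need to flip in order to perform $\R$ is the folded \emph{interior} edge $s(e')$. So the relevant obstruction is not ``$f$ crossing the monogon boundary twice'' but rather ``$f$ first crossing the folded edge $s(e')$''. If $f$'s initial direction points strictly into the monogon and $f$ terminates at the enclosed puncture, then $f$ is isotopic to $s(e')$; but $s(e')$ is a side of the quadrilateral you just flipped, hence was already an edge of the \emph{previous} triangulation. The reduction therefore halted one step earlier --- and there the usual argument (the only edge changed by a flip is the diagonal) forces the tracked edge to equal $f$. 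Your conclusion is right, but the halting happens one step before the blocked configuration would be reached, not at it.
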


Similarly, the order preserving property holds. 

\noindent {\em Remark:} In the computation of the gap functions and the McShane identity, the ``end gaps'' correspond to the vertices which are left or right blocked.

\bibliographystyle{amsplain}

%\bibliography{./../Library/bibtex/papers}

\end{document}